\def\thissectiontitle{}
\def\thissectionnumber{}
\def\thissubsectiontitle{}
\def\thissubsectionnumber{}
\gdef\thissectiontitle{#1}\gdef\thissectionnumber{\thesection}#1}
\gdef\thissubsectiontitle{#1}\gdef\thissubsectionnumber{\thesubsection}#1}
  \pretocmd{\section}{\global\toggletrue{todoSection}}{}{}
  \pretocmd{\subsection}{\global\toggletrue{todoSubsection}}{}{}
  \xpretocmd{\todo}{%
    \iftoggle{todoSubsection}{
     \addtocontents{tdo}{\protect\contentsline{subsection}%
        {\protect\numberline{\thissubsectionnumber}{\thissubsectiontitle}}{}{} }
      \global\togglefalse{todoSubsection}
        }{}
    }{}{}%
  \xpretocmd{\todo}{%
    \iftoggle{todoSection}{
     \addtocontents{tdo}{\protect\contentsline{section}%
        {\protect\numberline{\thissectionnumber}{\thissectiontitle}}{}{} }
      \global\togglefalse{todoSection}
        }{}
    }{}{}%
\newcommand{\cref}[1]{\zcref{#1}}
\theoremstyle{definition}
\newtheorem{defn}{Definition}[section]
\newtheorem{notation}[defn]{Notation}
\newtheorem{recoll}[defn]{Recollection}
\theoremstyle{plain}
\newtheorem{lem}[defn]{Lemma}
\newtheorem{conj}[defn]{Conjecture}
\newtheorem*{conj*}{Conjecture}
\newtheorem{cor}[defn]{Corollary}
\newtheorem{prop}[defn]{Proposition}
\newtheorem{thm}[defn]{Theorem}
\newtheorem{introthm}{Theorem}
\newtheorem{introcor}[introthm]{Corollary}
\newtheorem{exmpl}[defn]{Example}
\theoremstyle{remark}
\newtheorem{rmk}[defn]{Remark}
\newtheorem*{rmk*}{Remark}
\newcommand{\zcrefname}[3]{%
  \zcRefTypeSetup{#1}{
    Name-sg = #2 ,
    name-sg = #2 ,
    Name-pl = #3 ,
    name-pl = #3 ,
  }
}
\theoremstyle{definition}
\theoremstyle{plain}
\newcommand{\Z}{{\mathbb{Z}}}
\newcommand{\N}{{\mathbb{N}}}
\newcommand{\Q}{{\mathbb{Q}}}
\newcommand{\C}{{\mathbb{C}}}
\renewcommand{\S}{{\mathbb S}}
\newcommand{\finfld}[1]{\mathbb{F}_{#1}}
\newcommand{\colim}[1]{{\underset{\substack{#1}}{\operatorname{colim}}\,}}
\newcommand{\colimil}[1]{{{\operatorname{colim}}_{#1}\,}}
\renewcommand{\lim}[1]{{\underset{\substack{#1}}{\operatorname{lim}}\,}}
\newcommand{\Map}[3]{{\operatorname{Map}_{#1}({#2},{#3})}}
\newcommand{\map}[3]{{\operatorname{map}_{#1}({#2},{#3})}}
\newcommand{\imap}[3]{{\underline{\smash{\operatorname{map}}}_{#1}({#2},{#3})}}
\newcommand{\Sp}{{\operatorname{Sp}}}
\newcommand{\SH}[1]{\mathcal{SH}(#1)}
\newcommand{\SHet}[1]{\mathcal{SH}_{\et}(#1)}
\newcommand{\Fib}[1]{{\operatorname{fib}\mathopen{}\left(#1\right)\mathclose{}}}
\newcommand{\Cat}[1]{{\mathcal {#1}}}
\newcommand{\ShvTopH}[2]{\operatorname{Shv}^{\operatorname{h}}_{#1}({#2})}
\newcommand{\CAlg}[1]{{\operatorname{CAlg}(#1)}}
\newcommand{\Mod}[2]{\operatorname{Mod}_{#1}(#2)}
\newcommand{\et}{\operatorname{\acute{e}t}}
\newcommand{\Sm}[1]{\operatorname{Sm}_{#1}}
\newcommand{\Gm}{\mathbb{G}_m}
\DeclareMathOperator{\Spec}{Spec}
\newcommand{\sslash}{/\mkern-6mu/}
\newcommand{\cartsymb}{\arrow[dr, phantom,"\scalebox{1.5}{\color{black}$\lrcorner$}", near start, color=black]}
\let\OLDthebibliography\thebibliography
\renewcommand\thebibliography[1]{
  \OLDthebibliography{#1}
  \setlength{\parskip}{0pt}
  \setlength{\itemsep}{0pt plus 0.3ex}
}
\title{Nilpotence of \texorpdfstring{$\eta$}{eta} in étale motivic spectra}
\author{Klaus Mattis and Swann Tubach}
\begin{document}

\maketitle

\begin{abstract}
    We show that every object of the stable étale motivic homotopy category over any scheme is \texorpdfstring{$\eta$}{eta}-complete. 
    In some cases we show that in fact the fourth power of \texorpdfstring{$\eta$}{eta} is null, whereas the third power of $\eta$ is always nonvanishing,
    similar to the situation in topology. Moreover, we prove an étale version of May's nilpotence conjecture,
    that states that $H\Z \in \Sp$ detects the vanishing of $\mathbf{E}_\infty$-rings. 
    We use this to show a version of Nishida's nilpotence theorem in $\SHet{S}$,
    i.e.\ that any positive degree self map of the unit is nilpotent.
\end{abstract}

\hypersetup{pdfborder=0 0 0}
\tableofcontents
\hypersetup{pdfborder=1 1 1}

%\listoftodos

\section*{Introduction}

In topology, the Hopf map provides the first example of a nonzero element of a homotopy group of the form 
$\pi_n(S^{n-1})$, it is a fibration $S^3\to S^2$ whose fibers are all isomorphic to $S^1$. A simple definition is as follows:
\[\eta_\mathrm{top}\colon S^3\simeq \C^2\setminus\{0\} \to \C\mathbb{P}^1\simeq S^2\] is the projection, up to homotopy. The (desuspended) image of $\eta_\mathrm{top}$, pointed at any point of $S^3$, in the category of spectra $\Sp$ is a map 
\[\eta_\mathrm{top}\colon \Sigma \S\to \S\] from the suspension of the sphere spectrum $\S$ to the sphere spectrum. 
It provides a generator for the first stable homotopy group of the sphere: $$0\neq\eta_\mathrm{top}\in\pi_1(\S)\simeq \Z/2\Z.$$ 
One may compute the powers of $\eta_\mathrm{top}$, and show that $\eta_\mathrm{top}^4=0$. This is easy, as $\pi_4(\S)=0$,
which can be read of from the $E_2$-page of the Adams spectral sequence. 
It is true, but harder to see, that $\eta_\mathrm{top}^3\neq 0 \in \pi_3(\S)=\Z/24\Z$. 
Indeed, this can be done by computations with Toda brackets, see \cite[Chapter V, Equation 5.5 and Proposition 5.6]{toda1962composition}. 

Motivic homotopy theory aims to imitate the methods of algebraic topology in the world of algebraic geometry. For that matter, many classical results of 
homotopy theory have now a version in algebraic geometry, see e.g.\ \cite{hoyois2015hopkinsmorel,asok2017affine,asok2023freudenthal}. Let $S$ be a scheme. Following Morel and Voevodsky, one considers the stable $\infty$-category $\SH{S}$ of $\mathbb{A}^1$-invariant motivic spectra. As in topology, we have an algebraic Hopf map given by the canonical projection
\[\eta\colon \mathbb{A}^2_S\setminus\{0\} \to \mathbb{P}^1_S\] whose (desuspended) image in $\SH{S}$ is a map 
\[\eta\colon \Gm\to \S\] from the motivic sphere $\Gm$ to the motivic sphere spectrum $\S$. 
As a corollary to a theorem of Morel \cite[Corollary 6.4.5]{morel2004motivic}, one may compute over a perfect field $k$ 
the endomorphisms of the $\eta$-inverted sphere
$\S[\eta^{-1}] \coloneqq \colim{} (\S \xrightarrow{\eta} \Gm^{\otimes -1} \xrightarrow{\eta} \Gm^{\otimes -2} \to \cdots)$: 
we have 
\[\mathrm{End}_{\SH{k}}(\S[\eta^{-1}])\simeq W(k),\] 
where $W(k)$ is the Witt ring of symmetric bilinear forms of $k$. 
In particular, by pulling back to fields, we see that the map $\eta$ is \emph{never nilpotent} in $\SH{S}$, for any (nonempty) scheme $S$. 
This implies that in $\SH{S}$, there exists many $\eta$-periodic objects, that is, objects $M$ such that the map $\eta \colon \Gm \otimes M\to M$ is an equivalence.

In this short note, we observe that this discrepancy between motivic homotopy theory and classical homotopy theory disappears if one works in the étale local stable $\mathbb{A}^1$-homotopy category $\SHet{S}$
(see e.g.\ \cite[§5]{Bachmann2021etalerigidity} for a definition, in the étale setting 
we will always work with hypersheaves).
Our main result is the following:
\begin{introthm} [\cref{thm:eta-complete}]
    Let $S$ be a scheme. Then for $X\in\SHet{S}$, the object $X[\eta^{-1}]$ is zero (that is, $\eta$ is weakly nilpotent). In particular, every object of $\SHet{S}$ is $\eta$-complete, and $\eta$ acts nilpotently on any compact object of $\SHet{S}$.
\end{introthm}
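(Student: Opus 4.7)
The plan is to reduce everything to the single vanishing $\S[\eta^{-1}]\simeq 0$ in $\SHet{S}$: given this, $X[\eta^{-1}]\simeq X\otimes\S[\eta^{-1}]=0$ for every $X$, so every object is $\eta$-complete, and for compact $X$ the vanishing of the filtered colimit defining $X[\eta^{-1}]$ forces $\eta^n$ to act trivially on $X$ for some $n$ (the identity of $X$ must factor through a finite stage of the colimit tower). Because $\SHet{-}$ is by definition a hypersheaf of stable $\infty$-categories on the étale site of schemes, vanishing of an object may be tested at étale stalks. The stalk at a geometric point $\bar s\to S$ is $\SHet{\Spec R}$, where $R=\mathcal{O}_{S,\bar s}^{\mathrm{sh}}$ is a strictly henselian local ring with separably closed residue field $k$, so I can assume throughout that we are in this local setting.

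Next I would invoke the arithmetic fracture square: $\S[\eta^{-1}]=0$ holds if and only if both the rationalisation and every $\ell$-completion vanish. Rationally, Morel's $\pm$-decomposition (base-changed from the residue field) identifies $\S_\Q[\eta^{-1}]$ with the minus summand $\S_\Q^{-}$, which is controlled by the rational Witt sheaf; this sheaf is trivial because the residue field $k$ is separably closed, and one propagates the vanishing from $k$ to $R$ via henselian rigidity for the étale Witt sheaf. For a prime $\ell$ invertible on $R$, Bachmann's étale rigidity theorem \cite{Bachmann2021etalerigidity} identifies $\SHet{\Spec R}^{\wedge}_\ell$ with classical $\ell$-complete spectra on the small étale site of $R$, and sends $\eta$ to $\etatop$; since $\etatop^4=0$ in topology, this gives $\S[\eta^{-1}]^{\wedge}_\ell=0$ for free.

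The main obstacle is the $\ell$-completion at $\ell=p$ equal to the residue characteristic of $R$ (when positive), where the cited rigidity equivalence no longer applies. My expectation is that one shows $\S/p\simeq 0$ directly in $\SHet{\Spec R}$ in this case, by combining the vanishing of the positive-degree mod-$p$ étale cohomology of a separably closed field of characteristic $p$ (via Artin--Schreier theory) with a rigidity-type statement for mod-$p$ étale sheaves on the henselian pair $(R,k)$; equivalently, one could try to prove that $\SHet{\Spec R}^{\wedge}_p$ vanishes in this setting, which would be strictly stronger. Once this residue-characteristic contribution is dealt with, the arithmetic fracture square assembles the three pieces into $\S[\eta^{-1}]=0$ over every strictly henselian local base, and étale hyperdescent concludes the argument over arbitrary $S$.
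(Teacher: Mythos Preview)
Your opening reductions (smashing localization, telescope = periodization, nilpotence on compact objects from vanishing of the telescope) are correct and match the paper's framework. The divergence begins with your choice of localization: you pass to étale stalks of $S$, landing in strictly henselian local rings, whereas the paper first pulls back along $S \to \Spec(\Z)$ and then invokes conservativity of the pullbacks to the \emph{geometric points} of $\Spec(\Z)$, which are spectra of algebraically closed \emph{fields}. That seemingly minor difference is what saves the paper from the problem you run into.

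Your genuine gap is the case $\ell = p = \operatorname{char}(k) > 0$. The expectation that $\S/p \simeq 0$ in $\SHet{\Spec R}$ is simply false when $R$ is strictly henselian of mixed characteristic: by the rigidity theorem (which does still apply, contrary to what you say---it holds for every prime $\ell$, one just passes to $R[1/\ell]$ first) one has $\SHet{R}^\wedge_p \cong \ShvTop{\et}{R[1/p],\Sp}^\wedge_p$, and for, say, $R = W(\overline{\mathbb{F}}_p)$ the scheme $\Spec R[1/p]$ is a nonempty characteristic-zero point, so this category is nonzero. No amount of Artin--Schreier or henselian rigidity will make $\S/p$ vanish there. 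Your rational step is also shakier than it needs to be: Morel's identification of $\S_\Q^-$ with the Witt sheaf is a statement over fields, and ``henselian rigidity for the étale Witt sheaf'' over a mixed-characteristic base is not something you can just cite.

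Both problems disappear if you replace the full profinite fracture by the $2$-adic one, as the paper does. Over any strictly henselian local $R$ one has that $-1$ is a square (lift from the separably closed residue field by Hensel, or use $-1=1$ in characteristic $2$), so \cref{lem:eta-2-tors} gives $\eta = 0$ in $\SHet{R}[\tfrac12]$; this single observation kills the rational part and every odd-primary completion simultaneously, with no Witt-sheaf argument and no residue-characteristic case distinction. The remaining $2$-complete piece is handled exactly as you indicate, via rigidity and $\eta_{\mathrm{top}}^4 = 0$. The paper avoids even this much local analysis by reducing to algebraically closed fields from the start.
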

A corollary of this result is the following:
% \begin{introcor}[{\cref{lem:motive-conservative}}]
%     Let $S$ be a scheme that is étale locally étale bounded (cf.\ \cref{def:etale-bdd}) and of finite Krull dimension.
%     The functor
%     \[M\colon\SHet{S}\to\mathrm{DA}_{\et}(S,\Z)\] 
%     from \cref{lem:rationalTr}
%     is conservative on bounded below objects for the homotopy t-structure (\cref{def:htpy-t}).
% \end{introcor}
\begin{introcor}[{\cref{lem:etale-descent-already-eta-complete}}]
    Let $S$ be a scheme.
    The étale sheafification functor $L_{\et} \colon \SH{S} \to \SHet{S}$
    factors canonically over $\SH{S}^{\wedge}_{\eta}$. In particular, any object of $\SH{S}$ that satisfies étale descent
    is already $\eta$-complete.
\end{introcor}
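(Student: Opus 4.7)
The plan is to combine \cref{thm:eta-complete} with standard properties of the adjunction $L_{\et} \dashv \iota$, where $\iota\colon \SHet{S} \hookrightarrow \SH{S}$ denotes the fully faithful inclusion of étale-local objects. Two facts drive the argument: that $L_{\et}$ is a symmetric monoidal left adjoint, so it commutes with tensor products and filtered colimits; and that $\iota$, being a right adjoint between presentable stable $\infty$-categories, is exact.

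First I would show that $L_{\et}$ annihilates every $\eta$-periodic object, i.e., every $Y \in \SH{S}$ with $Y \simeq Y[\eta^{-1}]$. By symmetric monoidality and preservation of filtered colimits, $L_{\et}(Y) \simeq L_{\et}(Y[\eta^{-1}]) \simeq L_{\et}(Y)[\eta^{-1}]$, which vanishes by \cref{thm:eta-complete}. Next I would identify the kernel of $(-)^{\wedge}_{\eta}\colon \SH{S} \to \SH{S}^{\wedge}_{\eta}$ with the localizing subcategory of $\eta$-periodic objects (equivalently, those $Y$ with $Y \otimes \S/\eta \simeq 0$). Since $L_{\et}$ is exact and kills this kernel, the universal property of the Bousfield localization $(-)^{\wedge}_{\eta}$ yields the desired factorization $L_{\et} \simeq \overline{L_{\et}} \circ (-)^{\wedge}_{\eta}$ with $\overline{L_{\et}} = L_{\et}|_{\SH{S}^{\wedge}_{\eta}}$.

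For the ``in particular'' statement, I would argue that $\iota$ preserves $\eta$-completeness. By \cref{thm:eta-complete}, every $Z \in \SHet{S}$ satisfies $Z \simeq \limil{n} Z/\eta^n$; exactness of $\iota$ means it commutes with both the cofibers $(-)/\eta^n$ and with the sequential limit, giving $\iota Z \simeq \limil{n} \iota(Z)/\eta^n$, so that $\iota Z$ is $\eta$-complete in $\SH{S}$. Applied to any $X$ satisfying étale descent, so that $X \simeq \iota L_{\et}(X)$, this gives that $X$ is $\eta$-complete.

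The one nontrivial input I anticipate is the identification of the kernel of $\eta$-completion with the $\eta$-periodic objects. This is standard for the smashing localization $(-)[\eta^{-1}]$ paired with its reflective completion in a presentable stable $\infty$-category, but one should ensure it matches whichever concrete model of $\eta$-completion has been adopted earlier in the paper.
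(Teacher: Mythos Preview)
Your proof is correct and follows essentially the same route as the paper. The paper phrases the factorization step as ``$L_{\et}$ inverts every $\eta$-equivalence $f$'' and then uses \cref{lem:nu-equiv-iff-fiber} to say $\Fib{f}$ is $\eta$-periodic, hence killed by $L_{\et}$; your version (kill $\eta$-periodic objects, then identify these with the kernel of $(-)^\wedge_\eta$) is the same argument in different packaging, and the kernel identification you flag as ``the one nontrivial input'' is exactly \cref{lem:nu-equiv-iff-fiber}.

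One minor simplification: for the ``in particular'' clause you do not need the limit-tower description of completion. Once you know $L_{\et}$ kills $\eta$-periodic objects, for any $\eta$-periodic $Y$ and any $X$ satisfying étale descent you have $\map{}{Y}{X} \simeq \map{}{Y}{\iota L_{\et} X} \simeq \map{}{L_{\et} Y}{L_{\et} X} \simeq 0$, so $X$ is local with respect to $\eta$-periodic objects, i.e.\ $\eta$-complete. Equivalently, the right adjoint of a composite of localizations is the composite of the right adjoints, so $\iota$ automatically lands in the $\eta$-complete objects. The paper leaves this implicit.
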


In good cases, we can compute the index of nilpotence of $\eta$ in $\SHet{S}$. For example, if $k$ is an algebraically closed field, we show that in $\SHet{k}$ we have $\eta^4=0$. More generally:
\begin{introthm}[{\cref{cor:exists-Galois-cover-eta4-zero,prop:eta4-is-zero-aic}}]
    Let $S$ be any scheme. 
    Then there exists a finite faithfully flat map $S' \to S$ such that $\eta^4$ is null in $\SHet{S'}$.

    If there exists a map $f \colon S \to \Spec(k)$ where $k$ is a field with $\mathrm{cd}_2(k) \leqslant 1$ and $\sup_{p \in \mathbb{P}} \mathrm{cd}_p(k) < \infty$ (e.g., any scheme defined over a finite field or an algebraically closed field),
    then $\eta^4$ is already null in $\SHet{S}$.
\end{introthm}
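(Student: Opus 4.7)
The plan is to reduce to the algebraically closed case and invoke the classical vanishing $\etatop^4 = 0$ via the étale realization. First, for $\bar k$ an algebraically closed field, I would use Bachmann's rigidity theorem: for each prime $\ell$ invertible in $\bar k$, the étale realization induces an equivalence $\SHet{\bar k}^\wedge_\ell \simeq \Sp^\wedge_\ell$ sending the motivic $\eta$ to $\etatop$. Since $\pi_4(\S) = 0$ in topology, this gives $\eta^4 = 0$ after each such $\ell$-completion. Combining this with the $\eta$-completeness of $\SHet{\bar k}$ from \cref{thm:eta-complete}, and treating the residue characteristic $p = \Char{\bar k}$ separately (using that over a field in which $\sqrt{-1}$ exists, the universal relation $\eta(1 + \langle -1\rangle) = 0$ specialises to $2\eta = 0$, so $\eta^4$ is $2$-torsion), one concludes $\eta^4 = 0$ in $\SHet{\bar k}$ itself, which would be \cref{prop:eta4-is-zero-aic}.

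For the second part, pullback along $f\colon S \to \Spec k$ reduces to showing $\eta^4 = 0$ in $\SHet{k}$. Here I would employ Galois descent along $\Spec \bar k \to \Spec k$: maps in $\SHet{k}$ are computed from maps in $\SHet{\bar k}$ via a descent spectral sequence whose $E_2$-page involves $H^s(G_k; \pi_t(-))$. The hypothesis $\cd{2}{k} \leq 1$ bounds the $2$-primary contribution (to which $\eta^4$ belongs, by the previous paragraph) to $s \leq 1$ columns, while $\sup_p \cd{p}{k} < \infty$ controls the residual odd-primary parts needed when reassembling from $\ell$-completions. Together with the vanishing at $\bar k$, this should lift to $\eta^4 = 0$ in $\SHet{k}$, and hence in $\SHet{S}$ by pullback.

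For the first part, my plan is to construct a finite faithfully flat Galois cover $S' \to S$ that places $S'$ into the setup of part 2, or that admits the descent argument directly. The natural candidates come from taking a finite Galois cover corresponding to a suitable quotient of $\pi_1^{\et}(S)$ with small $2$-cohomological dimension along the relevant fibres, together with adjoining $\sqrt{-1}$ so that the relation $2\eta = 0$ applies over $S'$. The main obstacle I foresee lies precisely here: producing such a cover uniformly for arbitrary (possibly non-Noetherian or mixed-characteristic) $S$, and verifying that the passage from $2$-complete vanishing of the specific class $\eta^4$ to its integral vanishing is sharp enough to track that class rather than merely its associated graded image in some filtration.
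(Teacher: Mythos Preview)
Your approach to the second part (over $k$ with $\cd{2}{k} \le 1$) is close in spirit to the paper's: both feed the vanishing of $\pi_4$ and $\pi_5$ of the topological sphere into a descent spectral sequence. The paper packages this via an arithmetic fracture square at the prime $2$ together with rigidity $\SHet{k}^\wedge_2 \simeq \ShvTop{\et}{k,\Sp}^\wedge_2$, rather than Galois descent for all $\ell$ simultaneously; only the $2$-completion is needed, since \cref{lem:eta-2-tors} gives $\eta = 0$ in $\SH{k}[1/2]$ outright (this uses that $-1$ is a sum of squares whenever $\cd{2}{k} < \infty$).

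There is, however, a circularity issue in your treatment of the algebraically closed case: you invoke \cref{thm:eta-complete} ($\eta$-completeness of $\SHet{\bar k}$), but in the paper's logical order that theorem is deduced \emph{from} \cref{prop:eta4-is-zero-aic} via \cref{cor:eta-period-zero}. Moreover, $\eta$-completion is not the mechanism that glues $\ell$-completions back to an integral statement; the arithmetic fracture square is. Once you replace your appeal to $\eta$-completeness by the fracture square at $2$ (as in \cref{prop:eta4-is-zero-w-contractible} and \cref{lem:codim-1-eta-4=0}), this part of the argument is fine.

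The genuine gap is in the first part. Your plan is to produce a finite flat cover $S' \to S$ placing $S'$ under a field of small $\cd{2}{}$, but for an arbitrary mixed-characteristic scheme no such cover exists: a scheme over $\Z$ does not become a scheme over a field after any finite flat base change. The paper's argument is entirely different and this is the key idea you are missing. One first proves $\eta^4 = 0$ over $\Spec(\overline{\Z})$ (the integral closure of $\Z$ in $\overline{\Q}$) using the fracture-square method; then one writes $\overline{\Z}$ as a filtered colimit of finite $\Z[i]$-algebras $A_\alpha$ and invokes continuity of $\SHet{-}$ together with compactness of $\Gm^{\otimes 4}$ (over each étale-bounded $A_\alpha$) to find a finite stage $A_{\alpha_0}$ at which $\eta^4$ already vanishes. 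Pulling back $\Spec(A_{\alpha_0}) \to \Spec(\Z)$ along $S \to \Spec(\Z)$ then yields the required finite faithfully flat cover of $S$.
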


In fact, we conjecture the following:
\begin{conj*}[{\cref{conj-eta-4}}]
    For any scheme $S$ we have $\eta^4 \cong 0$ in $\SHet{S}$.
\end{conj*}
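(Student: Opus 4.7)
Since the statement is stated as a conjecture, what follows is an outline of one possible line of attack rather than a proof. The plan is to reduce to the ``good residue field'' case of \cref{prop:eta4-is-zero-aic} via the fact that $\SHet{-}$ is an étale hypersheaf of stable $\infty$-categories, then to patch integrally by rigidity combined with an arithmetic fracture. At each geometric point $\bar s \to S$, the stalk of the presheaf of mapping spectra $U \mapsto \map{\SHet{U}}{(\Gm^{\otimes 4})_U}{\S_U}$ is $\map{\SHet{A}}{\Gm^{\otimes 4}}{\S}$ for $A = \mathcal{O}_{S,\bar s}^{sh}$, whose residue field $\kappa$ is separably closed and hence satisfies $\mathrm{cd}_2(\kappa) = 0$ and $\sup_p \mathrm{cd}_p(\kappa) \leqslant 1$. \cref{prop:eta4-is-zero-aic} therefore gives $\eta^4 = 0$ in $\SHet{\kappa}$ directly. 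Assuming an integral rigidity statement identifying $\SHet{A}$ with $\SHet{\kappa}$ (at least on the relevant hom), this gives $\eta^4 = 0$ stalkwise; it then remains to promote stalkwise vanishing to global vanishing through the étale descent spectral sequence.

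To establish the integral rigidity, I would combine a $p$-adic argument with a rational one. After $p$-completion for primes $p$ invertible on $A$, Bachmann's étale rigidity theorem identifies $\SHet{A}^{\wedge}_p$ with $p$-complete classical étale hypersheaves of spectra on $\Spec A$, under which the motivic $\eta$ becomes the topological Hopf map $\etatop$, satisfying $\etatop^4 = 0$. Rationally, Morel's splitting $\SH{-}_{\Q} \simeq \SH{-}_{\Q}^{+} \oplus \SH{-}_{\Q}^{-}$ combined with \cref{thm:eta-complete} forces $\SHet{-}_{\Q} \simeq \SHet{-}_{\Q}^{+}$ (the $-$ summand is $\eta$-periodic, hence killed by $\eta$-completion), and on the $+$ part $\eta = 0$ tautologically. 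An arithmetic fracture for the mapping spectrum would then assemble these pieces into the desired integral vanishing.

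The main obstacle is twofold. First, Bachmann's rigidity is only available for primes $p$ invertible on $A$, so the residue characteristic must be handled separately; here one would hope to use \cref{cor:exists-Galois-cover-eta4-zero} to produce a cover of $A$ killing $\eta^4$ and then argue that the Čech descent obstruction vanishes for the specific class $\eta^4$. Second, even granting stalkwise vanishing, the étale descent spectral sequence for the mapping hypersheaf can carry nonzero higher-cohomology contributions $H^p(S_{\et}, \pi_q)$ with $p \geqslant 1$ even when all stalks of $\pi_0$ vanish. Controlling these — either by producing an explicit global null-homotopy or by bounding the relevant étale cohomology — is, I expect, where the genuine difficulty of the conjecture lies.
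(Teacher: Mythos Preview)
This is stated in the paper as an open conjecture (\cref{conj-eta-4}), with no proof given; you correctly recognized this and offered a strategy rather than a proof. Your identification of the two main obstacles is accurate and matches the paper's own analysis.

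The paper's approach to the conjecture differs from your stalkwise reduction. Rather than passing to strict henselizations, the paper observes (in the remark immediately following the conjecture) that it suffices to treat $S = \Spec(\Z)$, and then attempts the fracture-square argument of \cref{prop:eta4-is-zero-w-contractible} directly over $\Z[i]$. In \cref{rmk:Z[i]-H2} the authors pinpoint the failure: the descent spectral sequence acquires a nonzero contribution from $H^2_{\et}(\Spec(\Z[i]), \pi_6(\hat{\mathds{1}}_2(-4)))$. They push this further in \cref{rmk:long-after-conj}, identifying $\pi_0 \map{\SHet{\Q(i)}}{\Gm^{\otimes 4}}{\S}$ with $\mathrm{Br}(\Q(i))[2]$ and reducing the question over $\Q(i)$ to the same question over each $\Q_p(i)$; \cref{rmk:eta4-zero-Rig} then shows that at least the image of $\eta^4$ in rigid-analytic motives over $\Q_p(i)$ vanishes. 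So both your approach and the paper's run into the same wall --- a higher étale cohomology class --- but the paper converts it into a concrete arithmetic quantity rather than an abstract spectral-sequence obstruction.

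One caution about your outline: the ``integral rigidity'' step you invoke, identifying $\SHet{A}$ for $A$ strictly henselian with $\SHet{\kappa}$ for its residue field, is not known and is almost certainly false as stated (it would erase all mixed-characteristic information). Your fallback plan --- fracture into $p$-adic pieces plus a rational argument via the vanishing of the minus part --- is essentially what the paper does in its partial results, and as you correctly anticipate the difficulty then concentrates entirely at the prime $2$, where the relevant obstruction group is genuinely nonzero.
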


\begin{rmk*}
    Since $f^* \eta^4 \cong \eta^4$ where $f \colon S \to \Spec(\Z)$ 
    is the unique morphism,
    it is of course enough to show that $\eta^4 \cong 0$ in $\SHet{\Z}$.
\end{rmk*}

On the other hand, we know that on almost all schemes that $\eta^3$ is not null:
\begin{introthm}[{\cref{lem:eta3-not-null}}]
    Let $S$ be a scheme which has a point of characteristic not $2$.
    Then $\eta^3$ is not null in $\SHet{S}$.
\end{introthm}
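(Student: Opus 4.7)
The plan is to pull back to a geometric point and then invoke Bachmann's étale rigidity theorem, reducing the statement to the classical nonvanishing $\etatop^3 \neq 0$ in the $2$-complete stable stems.

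First, since $\eta$ is stable under pullback, for any morphism $f\colon T \to S$ the symmetric monoidal functor $f^* \colon \SHet{S} \to \SHet{T}$ sends $\eta$ to $\eta$. Composing the assumed point $\Spec(k) \to S$ (with $\Char{k} \neq 2$) with a map $\Spec(\bar k) \to \Spec(k)$ from an algebraic closure, it therefore suffices to prove $\eta^3 \neq 0$ in $\SHet{\bar k}$ for some (hence any) algebraically closed field $\bar k$ of characteristic different from $2$.

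For such a $\bar k$ of exponential characteristic $e \neq 2$, Bachmann's étale rigidity theorem provides a symmetric monoidal equivalence
\[
  \SHet{\bar k}^{\wedge}_2[1/e] \simeq \Sp^{\wedge}_2
\]
induced by étale realization, and this equivalence sends the motivic Hopf map $\eta$ to the topological Hopf map $\etatop$. (By the first main theorem of this paper, every object of $\SHet{\bar k}$ is already $\eta$-complete, so no $\eta$-completion needs to be inserted on the left-hand side; moreover, any $e > 1$ is an odd prime and hence invertible after $2$-completion.) Thus if $\eta^3 = 0$ in $\SHet{\bar k}$, then $\etatop^3 = 0$ in $\Sp^{\wedge}_2$. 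But classically $\etatop^3 = 12\nu$ is the unique element of order $2$ in $\pi_3(\S) \cong \Z/24$, and it therefore maps to a nonzero element of $\pi_3(\S^{\wedge}_2) \cong \Z/8$, a contradiction.

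The main obstacle will be the bookkeeping around the rigidity statement: confirming that the hypercomplete $\SHet{-}$ used in this paper matches the version for which rigidity is stated in \loccit, and that under the equivalence above the motivic $\eta$ indeed corresponds to $\etatop$. The latter should ultimately be a construction-level matter, since both are realizations of the complex Hopf projection $\Aa^2 \setminus \{0\} \to \Pp^1 \simeq S^2$, but one has to trace this identification through the proof of rigidity.
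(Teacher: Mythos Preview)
Your reduction to an algebraically closed field $\bar k$ with $\Char{\bar k}\neq 2$ and the appeal to the rigidity equivalence $\SHet{\bar k}^\wedge_2 \simeq \Sp^\wedge_2$ match the paper's overall strategy. The gap is in the sentence ``this equivalence sends the motivic Hopf map $\eta$ to the topological Hopf map $\etatop$'', which you describe as bookkeeping. This identification is the entire content of the proof, and it is not a construction-level triviality: the rigidity equivalence is established by showing that the constant-sheaf functor $\Sp^\wedge_2 \to \SHet{\bar k}^\wedge_2$ is an equivalence, and its inverse has no direct description through which one could trace the geometric map $\Aa^2\setminus\{0\}\to\Pp^1$. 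A priori, $\rho_2(\eta)$ is only known to be some element of $\pi_1(\S^\wedge_2)\cong\Z/2$, and showing it is nonzero is precisely what one must do.

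The paper handles this in two steps. For $\bar k$ of characteristic zero (\zcref{lem:eta3-not-null-char-zero}), one reduces to $\overline{\Q}$ and uses that the $2$-adic realization $\rho_2$ factors through Betti realization $\rho_B$; since $\rho_B(\eta)=\etatop$ by inspection of complex points, one gets $\rho_2(\eta^3)\neq 0$. For $\bar k$ of characteristic $p\neq 2$, there is no Betti realization, and the paper instead passes through rigid analytic motives: using the chain $\SHet{\bar k}\simeq \mathrm{F}\SHet{\mathrm{Spf}(K^\circ)}\xrightarrow{\xi^*} \mathrm{Rig}\SHet{K}\xleftarrow{\mathrm{Rig}^*}\SHet{K}$ for $K$ a characteristic-zero field, one checks geometrically (\zcref{ex:P1A2}) that $\xi^*\eta_{\bar k}\simeq \mathrm{Rig}^*\eta_K$, whence $\rho_2(\eta_{\bar k})=\rho_2(\eta_K)$ and the characteristic-zero case applies. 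If you want to avoid rigid geometry, you would need a genuinely different argument that $\rho_2(\eta)\neq 0$ over $\bar{\mathbb F}_p$ --- for instance via a motivic Steenrod-operation computation on $\Pp^2$, or by comparing stalks of $\rho_2(\eta)$ over $\Spec(\Z[1/2])$ --- but some such input is required.
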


\begin{rmk*}
    If $S$ is a scheme where all points are of characteristic $2$,
    then $\eta$ (and in particular $\eta^3$) is null in $\SHet{S}$ by \cref{lem:eta-null-in-char-2}.
\end{rmk*}
Using similar methods we can show that the étale motivic cohomology spectrum detects nilpotence, which is the étale version 
of May's Nilpotence conjecture \cite[Theorem A]{zbMATH06525830}:
\begin{introthm}[\cref{thm:detects-nilpotence,lem:detects-nilpotence-maps}]
    Let $S$ be an étale locally étale bounded scheme (\emph{e.g.} $S$ is of finite type over $\Z$ or a field of finite virtual cohomological dimension). 
    Let $R\in\CAlg{\SHet{S}}$, $p,q\in\Z$ and $\nu\colon \S^{p,q}\otimes R\to R$. 
    Then $\nu\otimes H\Z_{\et}$ is weakly nilpotent if and only if $\nu$ is weakly nilpotent.
\end{introthm}
As a corollary, we obtain a version of Nishida's Nilpotence Theorem \cite{zbMATH03411885} for étale motivic spectra:
\begin{introcor}[\cref{cor:Nishida,cor:Nishida+BS}]
    Let $S$ be an étale locally étale bounded scheme.
    Then any torsion map $\nu \colon \S^{p,q}\to \S$ in $\SHet{S}$ is weakly nilpotent.
    If $S$ satisfies the Beilinson--Soulé vanishing conjecture, cf.\ \cref{defn:BS}, and either $p > 0$ or $q > 0$, one can remove the assumption that $\nu$ is torsion. 
    This applies if $S$ is the spectrum of a number field or a finite field.
\end{introcor}

\subsubsection*{Acknowledgments.}
We thank Tom Bachmann for pointing out a mistake in an earlier version of this article,
and Julie Bannwart for helpful feedback on a draft. We thank Joseph Ayoub and Lucas Gerth for taking the time of answering questions on rigid geometry.
We thank Dan Isaksen for questions that led to \cref{sect:detect-nilpotence}.
We also want to thank ``Société Nationale des Chemins de fer Français'' 
and ``Deutsche Bahn'', whose combined efforts made it necessary for the first author to stay 
two unexpected days at the second author's home, during which parts of this article were written.

Klaus Mattis acknowledges support by the Deutsche Forschungsgemeinschaft
(DFG, German Research Foundation) through the Collaborative Research Centre TRR 326 \emph{Geometry and 
Arithmetic of Uniformized Structures}, project number 444845124.

Swann Tubach acknowledges support by the ÉNS de Lyon and the European Research Council (ERC) under
the European Union’s Horizon 2020 research and innovation programme ``EMOTIVE'' (grant agreement
no. 101170066).
\section{Recollections on completions and periodizations}
In this section, let $\Cat E$ be a stable presentably symmetric monoidal category with unit $\S$, and 
$\nu \colon T \to \S$ be a map with $T$ tensor-invertible.
Consider the functor $(-) \sslash \nu$ that is given as the cofiber of $T \otimes - \xrightarrow{\nu} -$. 

\begin{defn}
    We say that a map $f \colon X \to Y$ is a $\nu$-equivalence, if $f \sslash \nu$ 
    is an equivalence. Write $(-)^\wedge_\nu \colon \Cat E \to \Cat E$ for the associated 
    Bousfield localization at $\nu$-equivalences, called \emph{$\nu$-completion}.
    We write $\Cat E\phantom{}^\wedge_\nu$ for the essential image of $(-)^\wedge_\nu$.
\end{defn}

\begin{defn}
    We say that an object $X \in \Cat E$ is \emph{$\nu$-periodic} if $X \sslash \nu = 0$
    (equivalently, $T \otimes X \xrightarrow{\nu} X$ is an equivalence).
    The $\nu$-periodization functor is the Bousfield localization 
    $(-)[\nu^{-1}] \colon \Cat E \to \Cat E$ with essential image the subcategory of $\nu$-periodic objects.
    Since $\nu$-periodic objects are evidently closed under limits and colimits (for limits use that $T \otimes -$ preserves 
    them since $T$ is invertible), this localization exists by the adjoint functor theorem.
\end{defn}

\begin{lem}
    The localization functor $(-)[\nu^{-1}]$ is smashing,
    i.e.\ for all $X \in \Cat E$ we have $X[\nu^{-1}] \cong \S[\nu^{-1}] \otimes X$.
\end{lem}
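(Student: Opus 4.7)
The plan is to construct an explicit model of $\S[\nu^{-1}]$ as a filtered colimit, and then show that smashing with it realizes the $\nu$-periodization functor by checking the universal property.

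First, I would introduce as candidate
\[ \S[\nu^{-1}] \coloneqq \colim{n \in \N} \bigl( \S \xrightarrow{\nu} T^{\otimes -1} \xrightarrow{\nu} T^{\otimes -2} \xrightarrow{\nu} \cdots \bigr), \]
where each transition map is obtained by tensoring $\nu \colon T \to \S$ with the appropriate power of $T^{\otimes -1}$. Since $T$ is invertible, the functor $T \otimes -$ preserves colimits, giving
\[ T \otimes \S[\nu^{-1}] \cong \colim{n \in \N} \bigl( T \xrightarrow{\nu} \S \xrightarrow{\nu} T^{\otimes -1} \to \cdots \bigr). \]
The map $\nu \colon T \otimes \S[\nu^{-1}] \to \S[\nu^{-1}]$ is then precisely the map of colimits induced by dropping the initial term $T$ of the diagram, which is cofinal and hence an equivalence. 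So $\S[\nu^{-1}]$ is $\nu$-periodic, and since $\otimes$ preserves colimits in each variable, $X \otimes \S[\nu^{-1}]$ is $\nu$-periodic for every $X \in \Cat E$.

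Next, I would verify that the unit $X \to X \otimes \S[\nu^{-1}]$ coming from $\S \to \S[\nu^{-1}]$ exhibits the $\nu$-periodization. For $\nu$-periodic $Y \in \Cat E$, the internal hom gives
\[ \imap{\Cat E}{\S[\nu^{-1}]}{Y} \cong \lim{n \in \N}\, \imap{\Cat E}{T^{\otimes -n}}{Y} \cong \lim{n \in \N}\, (T^{\otimes n} \otimes Y), \]
whose structure maps are again induced by $\nu$. For $\nu$-periodic $Y$ these are all equivalences, so the limit is equivalent to $Y$. Combining this with tensor-hom adjunction, for any $X \in \Cat E$ and $\nu$-periodic $Y$ we get
\[ \Map{\Cat E}{X \otimes \S[\nu^{-1}]}{Y} \cong \Map{\Cat E}{X}{\imap{\Cat E}{\S[\nu^{-1}]}{Y}} \cong \Map{\Cat E}{X}{Y}, \]
identifying $X \otimes \S[\nu^{-1}]$ with the universal $\nu$-periodic object under $X$, i.e.\ with $X[\nu^{-1}]$.

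The argument is essentially formal once one has the right candidate; the only care needed is in verifying the cofinal-shift equivalence and that the limit of $T^{\otimes n} \otimes Y$ collapses for $\nu$-periodic $Y$. I do not expect a genuine obstacle here, as both hinge purely on the invertibility of $\nu$-multiplication on periodic objects together with the assumption that $T$ is $\otimes$-invertible.
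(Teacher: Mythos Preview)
Your proof is correct but follows a different route from the paper's. The paper appeals to a general criterion for smashing localizations: a Bousfield localization is smashing as soon as the local objects are closed under $(-) \otimes Y$ and $\imap{}{Y}{-}$ for arbitrary $Y$. For $\nu$-periodic $X$ these closures follow in one line from invertibility of $T$, since $\nu \otimes \id{X \otimes Y}$ and $\nu \otimes \id{\imap{}{Y}{X}} \cong \imap{}{Y}{\nu \otimes \id{X}}$ are equivalences whenever $\nu \otimes \id{X}$ is. Your approach instead builds the localization by hand as the mapping telescope and checks the universal property via the tensor--hom adjunction. This buys you more: you simultaneously establish that $X[\nu^{-1}]$ coincides with the telescope $M_\nu(X)$, a fact the paper records separately---and under an additional hypothesis---as a later lemma. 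The one step where you are slightly cavalier is the identification of $\nu \otimes \id{M_\nu(\S)}$ with the shift map: the transition maps in the telescope have $\nu$ tensored on one side of $T^{\otimes -n}$, while $\nu \otimes \id{M_\nu(\S)}$ tensors on the other, so matching them uses the symmetry of $\otimes$ in an essential (if routine) way. This does work out, but the paper's authors evidently did not regard the telescope identification as entirely formal, which is why they isolate it and argue it separately.
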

\begin{proof}
    By \cite[Lemma A.5.2]{annala2022motivic} it is enough to show 
    that for all $X, Y \in \Cat E$ such that $X$ is $\nu$-periodic,
    then so is $X \otimes Y$ and $\imap{}{Y}{X}$.
    Here, $\imap{}{-}{-}$ denotes the internal hom object 
    in $\Cat E$ (which exists by the adjoint functor theorem).
    But now both $\nu \otimes X \otimes Y$ 
    and $\nu \otimes \imap{}{Y}{X} \cong \imap{}{Y}{\nu \otimes X}$ 
    are equivalences, since already $\nu \otimes X$ is one.
\end{proof}

\begin{lem} \label{lem:nu-equiv-iff-fiber}
    Let $f \colon X \to Y$ be a map in $\Cat E$. Then $f$ is a $\nu$-equivalence if and only if $\Fib{f}$
    is $\nu$-periodic.
\end{lem}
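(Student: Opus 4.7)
The plan is to exploit the stability of $\Cat E$: both fiber sequences and the cofiber functor $(-)\sslash\nu$ are exact, so one can compute the fiber of $f \sslash \nu$ by first taking the fiber of $f$.

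First I would form the fiber sequence $\Fib{f} \to X \to Y$ coming from the map $f$. Next I would apply the functor $(-)\sslash\nu$ termwise. Since $(-)\sslash\nu$ is defined as the cofiber of $T \otimes (-) \xrightarrow{\nu} (-)$, and since tensoring with the invertible object $T$ preserves fiber sequences, the functor $(-)\sslash\nu$ is exact. In particular it preserves fiber sequences, and so I obtain a fiber sequence
\[
\Fib{f}\sslash\nu \longrightarrow X\sslash\nu \xrightarrow{\ f\sslash\nu\ } Y\sslash\nu
\]
in $\Cat E$, identifying $\Fib{f\sslash\nu}$ with $\Fib{f}\sslash\nu$.

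Now I would use that $\Cat E$ is stable, so $f\sslash\nu$ is an equivalence if and only if $\Fib{f\sslash\nu} \cong 0$, i.e.\ iff $\Fib{f}\sslash\nu \cong 0$. Finally, $\Fib{f}\sslash\nu$ is by definition the cofiber of $\nu \otimes \Fib{f} \colon T \otimes \Fib{f} \to \Fib{f}$, and this cofiber vanishes (again using stability) precisely when $\nu \otimes \Fib{f}$ is an equivalence, which is the definition of $\Fib{f}$ being $\nu$-periodic. Chaining these equivalences proves both implications simultaneously.

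The argument is essentially formal; the only mild subtlety is the identification $\Fib{f\sslash\nu} \cong \Fib{f}\sslash\nu$, which I would justify by noting that $(-)\sslash\nu$ is built from the exact operation $T \otimes (-)$ (exact because $T$ is invertible, hence in particular dualizable, so $T \otimes -$ has both adjoints and preserves all limits and colimits) together with the cofiber construction, both of which commute with fibers in a stable $\infty$-category. No further ingredients beyond stability and invertibility of $T$ are needed.
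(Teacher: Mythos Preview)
Your proof is correct and follows exactly the same approach as the paper: both argue via the identification $\Fib{f\sslash\nu}\cong\Fib{f}\sslash\nu$ and then unwind the definitions of $\nu$-equivalence and $\nu$-periodicity. You simply spell out in more detail why $(-)\sslash\nu$ is exact, which the paper leaves implicit.
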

\begin{proof}
    The map $f$ is a $\nu$-equivalence if and only if $f \sslash \nu$ is an equivalence,
    i.e., if and only if $0 = \Fib{f \sslash \nu} \cong \Fib{f} \sslash \nu$.
    But the latter is zero if and only if $\Fib{f}$ is $\nu$-periodic.
\end{proof}

We now try to describe the $\nu$-periodization functor explicitly.
For this, recall the following definition:
\begin{defn}
    Let $X \in \Cat E$.
    We define the \emph{mapping telescope} $M_{\nu}(X)$ as the filtered colimit 
    \begin{equation*}
        \colim{} X \xrightarrow{\nu} T^{\otimes -1} \otimes X \xrightarrow{\nu} T^{\otimes -2} \otimes X \to \cdots
    \end{equation*}
    Since the tensor product is compatible with colimits, we see that $M_{\nu}(X) \cong M_{\nu}(\S) \otimes X$.
\end{defn}

Now, in a variety of situations, the mapping telescope agrees with the $\nu$-periodization.
\begin{lem} \label{lem:telescope}
    Suppose that there exists a compactly generated presentably symmetric monoidal stable category $\Cat D$
    with unit $\widetilde{\S}$,
    and a symmetric monoidal left adjoint $L \colon \Cat D \to \Cat E$.
    Suppose moreover that there exists a map $\widetilde{\nu} \colon \widetilde{T} \to \widetilde{\S}$ in $\Cat D$
    with $\widetilde{T}$ tensor invertible, such that $L(\widetilde{\nu}) \simeq \nu$.

    Then for all $X \in \Cat E$ we have $M_{\nu}(X) \cong X[\nu^{-1}]$.
\end{lem}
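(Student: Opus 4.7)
The plan is to exhibit the canonical map $X \to M_\nu(X)$ as the unit of the $\nu$-periodization by verifying its universal property, reducing first to the case $X = \S$. Since tensor product commutes with filtered colimits we have $M_\nu(X) \cong M_\nu(\S) \otimes X$, and by the preceding lemma the localization $(-)[\nu^{-1}]$ is smashing, so $X[\nu^{-1}] \cong \S[\nu^{-1}] \otimes X$. Hence it suffices to identify $\S \to M_\nu(\S)$ with the unit $\S \to \S[\nu^{-1}]$.

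The first step is to check that $M_\nu(\S)$ is itself $\nu$-periodic. Since $T \otimes -$ commutes with filtered colimits, reindexing gives $T \otimes M_\nu(\S) \cong \colim{m \geq -1} T^{\otimes -m}$, and the map $\nu \otimes \mathrm{id}_{M_\nu(\S)}$ to $M_\nu(\S) = \colim{n \geq 0} T^{\otimes -n}$ is induced term by term by the canonical cone; cofinality of the inclusion $\N_{\geq 0} \hookrightarrow \N_{\geq -1}$ then forces this map to be an equivalence. Consequently we obtain a canonical factorization $\S \to \S[\nu^{-1}] \to M_\nu(\S)$.

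The key step is to show that the second arrow above is an equivalence. Equivalently, for every $\nu$-periodic $Y \in \Cat E$ the precomposition map $\map{\Cat E}{M_\nu(\S)}{Y} \to \map{\Cat E}{\S}{Y}$ is an equivalence. I would rewrite the source as $\lim{n} \map{\Cat E}{T^{\otimes -n}}{Y}$, and using invertibility of $T$ present this tower as $\lim{n} \map{\Cat E}{\S}{T^{\otimes n} \otimes Y}$ with transition maps induced by $\nu \otimes \mathrm{id}_Y$; since $Y$ is $\nu$-periodic these transitions are equivalences, so the projection to the $n = 0$ term is an equivalence, which under the identifications coincides with precomposition along $\S \to M_\nu(\S)$.

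The main obstacle is the bookkeeping in this last step: chasing that the $n = 0$ projection really matches the restriction along the inclusion of the first term of the telescope, and similarly for the shift argument in the $\nu$-periodicity step. The direct universal-property argument sketched here does not visibly use the compactly generated category $\Cat D$ or the symmetric monoidal left adjoint $L$, which suggests the authors may instead prefer to first establish the result in $\Cat D$ via a standard compactly-generated Bousfield-style argument and then transfer along $L$, using that $L$ preserves filtered colimits and sends $\widetilde{\nu}$-periodic objects to $\nu$-periodic ones.
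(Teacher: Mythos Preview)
Your proposal is correct and, as you anticipate in your final paragraph, takes a more direct route than the paper. Both proofs reduce to $X = \S$ via the smashing property. The paper then argues in two steps: (i) it observes that $(-)[\nu^{-1}]$ sends $\S \to M_\nu(\S)$ to an equivalence, since after tensoring with $\S[\nu^{-1}]$ every transition map in the telescope becomes an equivalence; (ii) it shows $M_\nu(\S)$ is $\nu$-periodic, and for this it transfers to $\Cat D$ via $L(M_{\widetilde\nu}(\widetilde\S)) \cong M_\nu(\S)$ and invokes \cite[Lemma~17]{bachmann2018motivic} for the compactly generated case. So the hypotheses on $\Cat D$ and $L$ are used precisely to outsource step~(ii) to a reference. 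Your argument instead establishes $\nu$-periodicity of $M_\nu(\S)$ by the cofinality/shift argument and then checks the universal property of the localization by hand; this is self-contained and confirms that the extra hypotheses are not strictly necessary, at the cost of the symmetry-isomorphism bookkeeping you flag. One small simplification: once you have your step~2, the paper's observation~(i) already yields $\S[\nu^{-1}] \xrightarrow{\sim} M_\nu(\S)[\nu^{-1}] = M_\nu(\S)$, so the limit computation in your step~4 can be avoided.
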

\begin{proof}
    We have $M_{\nu}(X) \cong M_{\nu}(\S) \otimes X$ 
    and $X[\nu^{-1}] \cong \S[\nu^{-1}] \otimes X$ (the latter holds since the localization is smashing).
    Hence, it suffices to prove the result for $X = \S$. It is clear that $\S\to M_{\nu}(\S)$ is sent to an equivalence by the functor $(-)[\nu^{-1}]$. Thus, it suffices to prove that $M_{\nu}(\S)$ is $\nu$-periodic. For this, as 
     $L(M_{\widetilde{\nu}}(\widetilde{\S})) \cong M_{\nu} (L\widetilde{\S}) \cong M_{\nu}(\S)$, it suffices to prove the statement in $\Cat D$, which is compactly generated.
    Then the result is \cite[Lemma 17]{bachmann2018motivic}.
\end{proof}

\begin{rmk}
    The last lemma holds for example if $\Cat E$ is compactly generated.
\end{rmk}

\begin{defn}
    Let $X \in \Cat E$.
    We say that \emph{$\nu$ acts weakly nilpotent on $X$} if $M_{\nu}(X) \cong 0$,
    and that \emph{$\nu$ acts nilpotent on $X$} if $\nu^n \colon X \to T^{\otimes -n} \otimes X$ is null for some $n \ge 1$.

    Similarly, we say that \emph{$\nu$ is (weakly) nilpotent}
    if $\nu$ acts (weakly) nilpotent on $\S$.
\end{defn}

\begin{lem} \label{lem:nu:cmpt-nilpotent}
    Let $X \in \Cat E$ an object.
    If $\nu$ acts nilpotent on $X$, then it acts weakly nilpotent on $X$.
    The converse holds if $X$ is compact.
\end{lem}
\begin{proof}
    The first statement holds trivially, so assume that $X$ is compact.
    Thus, we have a filtered colimit of abelian groups
    \begin{equation*}
    0 = \pi_0\Map{\Cat E}{X}{M_{\nu}(X)} \cong \colim{n} \pi_0\Map{\Cat E}{X}{T^{\otimes -n} \otimes X}.
    \end{equation*}
    In particular, the vanishing of the canonical map $X \to M_{\nu}(X)$ is witnessed 
    on some finite stage, whence we obtain $\nu^n \simeq 0$ for some $n \gg 0$.
\end{proof}

\section{Recollections on stable étale motivic homotopy theory}

\begin{defn} \label{def:etale-bdd}
    Let $S$ be a scheme. We say that $S$ is \emph{étale bounded} if 
    \[\mathrm{sup}_{x\in X, p \in \mathbb{P}} \mathrm{cd}_p(\kappa(x))<\infty,\]
    where $\mathrm{cd}_p(k)$ is the mod-$p$-Galois cohomological dimension of a field $k$, and $\mathbb{P}$ is the 
    set of all prime numbers.
    Similarly, we say that $S$ is \emph{étale locally étale bounded} if there exists 
    an étale cover $S' \to S$ such that $S'$ is étale bounded.
\end{defn}

\begin{exmpl} \label{lem:recap:Z}
    By \cite[Example 2.14]{Bachmann2021etalerigidity} the scheme $\Spec(\Z)$ is étale locally étale bounded,
    and the scheme $\Spec(\Z[i]) = \Spec(\Z[x] / (x^2 + 1))$ is étale bounded.
\end{exmpl}

\begin{lem} \label{lem:recap:etale-morphism-etale-finite}
    Let $X \to S$ be a morphism of finite type with $S$ quasi-compact.
    If $S$ is (étale locally) étale bounded, the same is true for $X$.
\end{lem}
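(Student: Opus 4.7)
The plan is to unwind étale boundedness into a uniform bound on mod-$p$ Galois cohomological dimensions of residue fields, and then to invoke the classical bound on $\mathrm{cd}_p$ of a finitely generated field extension.

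First I would reduce to the case where $S$ is genuinely étale bounded. If $S' \to S$ is an étale cover with $S'$ étale bounded, I may refine it to be quasi-compact, after which $X' := X \times_S S' \to S'$ is again of finite type over a quasi-compact base, and $X' \to X$ is étale surjective. Étale boundedness of $X'$ would then give étale local étale boundedness of $X$, so once the genuinely-bounded case is handled, both halves of the statement follow.

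Next, assuming $S$ is étale bounded with bound $M$, I observe that since $X \to S$ is of finite type and $S$ is quasi-compact, $X$ itself is quasi-compact. It therefore admits a finite cover by open affines $\Spec(A_i)$ with each $A_i$ finitely generated as an $\mathcal{O}_S(V_i)$-algebra on, say, $n_i$ generators, for some affine open $V_i \subset S$. Setting $d := \max_i n_i$, every residue field $\kappa(x)$ with $x$ mapping to $s \in S$ is a finitely generated extension of $\kappa(s)$ of transcendence degree at most $d$, since $\kappa(x)$ is a quotient of $A_i \otimes_{\mathcal{O}_S(V_i)} \kappa(s)$ and the latter is generated by at most $n_i$ elements over $\kappa(s)$.

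Finally, I would invoke Serre's bound (\emph{Cohomologie Galoisienne}, Ch.\,II §4.2, Prop.\,11 together with Prop.\,3): for any finitely generated extension $L/k$ of transcendence degree $n$ and any prime $p$ one has $\mathrm{cd}_p(L) \leq \mathrm{cd}_p(k) + n + 1$, the extra $+1$ comfortably absorbing the equal-characteristic case (where $\mathrm{cd}_p(k) \leq 1$ anyway, and each transcendental generator raises $\mathrm{cd}_p$ by at most $1$). Pointwise application gives $\mathrm{cd}_p(\kappa(x)) \leq M + d + 1$ uniformly in $x \in X$ and $p \in \mathbb{P}$, so $X$ is étale bounded. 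I do not anticipate any substantive obstacle: the only minor points of care are refining the étale cover so that $S'$ is quasi-compact and separately handling $p = \mathrm{char}(\kappa(s))$ in Serre's bound, both of which are entirely standard.
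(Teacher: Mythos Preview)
Your proposal is correct. The reduction to the genuinely étale bounded case via base change along an étale cover matches the paper's proof exactly; your extra care in refining $S'$ to be quasi-compact is appropriate (the paper's proof omits this, but it is harmless since an open subscheme of an étale bounded scheme is still étale bounded).

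For the main step the paper simply cites an external lemma (\cite[Lemma 2.24]{mattis2025etale}), adding the remark that the hypotheses of finite Krull dimension and smoothness stated there are unnecessary. You instead give the self-contained argument that lies behind such a citation: bound the transcendence degree of $\kappa(x)/\kappa(s)$ uniformly using a finite affine cover of $X$, then apply Serre's bound $\mathrm{cd}_p(L) \le \mathrm{cd}_p(k) + \mathrm{trdeg}(L/k)$ for $p \neq \mathrm{char}(k)$ together with $\mathrm{cd}_p \le 1$ in the equal-characteristic case. This is exactly the expected content of the cited lemma, so the two approaches are the same in substance; yours has the advantage of being self-contained and of making explicit why neither smoothness nor finite Krull dimension is needed. (Minor remark: the extra $+1$ in your bound is not actually needed---Serre's inequality already handles the equal-characteristic case since then $\mathrm{cd}_p(\kappa(x)) \le 1 \le M + d$---but it does no harm.)
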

\begin{proof}
    Suppose that $S$ is étale locally étale bounded. Choose an étale cover $S' \to S$ so that $S'$ is étale bounded.
    In particular, we get an étale cover $S' \times_S X \to X$, such that $S' \times_S X \to S'$ is of finite type.

    Hence, we may assume that $S$ is étale bounded, and we have to show that the same is true for $X$.
    For this, see \cite[Lemma 2.24]{mattis2025etale} (the extra assumptions 
    given in the reference that $S$ is of finite Krull dimension and that $X \to S$ is smooth are not necessary).
\end{proof}

\begin{lem} \label{lem:recap:compact}
    Let $S$ be a scheme that is étale locally étale bounded.
    Then $\SHet{S}$ is compactly generated.
    If $S$ is moreover étale bounded, 
    then $\Sigma^\infty_+ X \in \SHet{S}$ is compact for every qcqs $X \in \Sm{S}$.
\end{lem}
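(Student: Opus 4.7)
The plan is to first reduce the compact generation claim for étale locally étale bounded $S$ to the étale bounded case via étale descent, and then, in the étale bounded case, exhibit $\Sigma^\infty_+ X$ for qcqs $X \in \Sm{S}$ as the required compact generators.

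For the reduction, choose an étale cover $p \colon S' \to S$ with $S'$ étale bounded. Since we work with hypersheaves, $T \mapsto \SHet{T}$ satisfies étale hyperdescent, so $\SHet{S}$ is the limit of $\SHet{-}$ over the Čech nerve of $p$. Every term of this Čech nerve is étale over $S'$, hence étale bounded by \cref{lem:recap:etale-morphism-etale-finite}. Granting the étale bounded case for each such term, compact generation transfers to $\SHet{S}$: for any étale morphism $q \colon T' \to T$ the pullback $q^*$ admits a left adjoint $q_\sharp$ whose right adjoint preserves colimits, so $q_\sharp$ preserves compact objects, and one can push compact generators of $\SHet{S'}$ forward to obtain compact generators of $\SHet{S}$.

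For the étale bounded case, the candidate compact generators are $\Sigma^\infty_+ X \otimes \Gm^{\otimes n}$ for $X \in \Sm{S}$ qcqs and $n \in \Z$. These generate $\SHet{S}$ because their non-étale analogues generate $\SH{S}$ and the étale hypersheafification functor $L_\et$ is a colimit-preserving symmetric monoidal left adjoint whose right adjoint is conservative. It remains to prove compactness, and since compactness is stable under tensoring with the invertible object $\Gm$, this reduces to compactness of $\Sigma^\infty_+ X$ itself. By \cref{lem:recap:etale-morphism-etale-finite}, the scheme $X$ is étale bounded, so its small étale site has a uniform bound on $p$-cohomological dimension at every prime $p$. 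By a Clausen--Mathew style hyperdescent argument, this bound forces the Postnikov tower of any hypercomplete étale sheaf of spectra to converge on $X$, hence evaluation at $X$ commutes with filtered colimits, which is precisely the compactness of $\Sigma^\infty_+ X$.

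The main obstacle is the control of hypercompletion: passing to hypersheaves in general destroys compact generation, and the entire role of the étale boundedness hypothesis is to prevent this pathology via bounded cohomological dimension. Making the Clausen--Mathew style convergence precise, uniformly across all primes and over the étale site of each qcqs $X \in \Sm{S}$, is the technical crux of the argument; after that the remaining steps are essentially formal, since stabilization of a compactly generated presentably symmetric monoidal $\infty$-category at the dualizable object $\Gm$ preserves compact generation.
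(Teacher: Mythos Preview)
The paper supplies no argument of its own here; it simply cites \cite[Proposition 2.4.22]{AGV6FF} (together with Remark 2.4.23 there). Your sketch correctly identifies the shape of that argument and the main obstacle (controlling hypercompletion via étale boundedness), so there is no real divergence of strategy to report; but several formulations should be tightened.

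In the reduction step you only need $S'$ to be étale bounded, not the higher Čech terms: conservativity of $p^*$ (which is what descent buys you) together with a compact-preserving left adjoint $p_\sharp$ already shows that $p_\sharp$ of a set of compact generators of $\SHet{S'}$ yields compact generators of $\SHet{S}$. Note also that \cref{lem:recap:etale-morphism-etale-finite} carries quasi-compactness and finite-type hypotheses you have not verified; in fact étale morphisms preserve étale boundedness directly, since their residue field extensions are finite separable. In the main step, the assertion that the small étale site of $X$ ``has a uniform bound on $p$-cohomological dimension'' does not follow from étale boundedness: that hypothesis bounds only the cohomological dimension of the \emph{residue fields}, not the global dimension of $X_\et$, which can be infinite if $X$ has large Krull dimension. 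What one actually extracts from étale boundedness (via Clausen--Mathew, cf.\ \cite[Lemma 2.16]{Bachmann2021etalerigidity}) is Postnikov completeness of the relevant étale topoi, so that hypersheaves coincide with sheaves; compactness of qcqs objects in the non-hypercomplete sheaf category is then standard (the étale site of a qcqs scheme is finitary) and transfers. Finally, calling $\Gm$ ``dualizable'' prior to $\Pp^1$-inversion is not correct; the operative hypothesis for stabilization to preserve compact generation is that the stabilizing object be \emph{compact}.
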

\begin{proof}
    This is \cite[Proposition 2.4.22]{AGV6FF} (see also Remark 2.4.23 of \emph{ibid}).
\end{proof}

\begin{thm}[Rigidity] \label{lem:recap:rigidity}
    Let $S$ be a scheme and $\ell$ a prime.
    Then there are canonical equivalences 
    \begin{equation*}
        \SHet{S}^\wedge_\ell \cong \SHet{S[1/\ell]}^\wedge_\ell \cong \ShvTopH{\et}{S[1/\ell],\Sp}^\wedge_\ell,    
    \end{equation*}
    where we write $S[1/\ell] \coloneqq S \times_{\Spec(\Z)} \Spec(\Z[1/\ell])$.
\end{thm}
\begin{proof}
    Consider the open immersion $S[1/\ell] \to S$,
    with closed reduced complement $Z$ of characteristic $\ell$.
    By \cite[Corollaire 4.5.4]{AyoubThesis} there is a recollement $\SHet{S[1/\ell]}^\wedge_\ell \to \SHet{S}^\wedge_\ell \to \SHet{Z}^\wedge_\ell$
    (note that Ayoub implicitly fixes a topology, which is allowed to be the étale topology,
    see the beginning of \cite[Section 4.5]{AyoubThesis}; the fact that Ayoub's result implies that we have an $\infty$-categorical recollement is classical, see \cite[Proposition 9.4.20]{robaloThese}).
    Moreover, $\SHet{Z}^\wedge_\ell = 0$ by \cite[Theorem A.1]{bachmann2021remarksetalemotivicstable}.
    This implies the first equivalence.
    The second equivalence in this generality is \cite[Theorem 3.1]{bachmann2021remarksetalemotivicstable}.
\end{proof}

We finish this section with a recollection on the motivic cohomology spectrum.

\begin{recoll}
    Recall the slice filtration (\cite{VSlice}).
    Let $S$ be a scheme and let $t\in\N$ be a positive integer. Recall that the $\infty$-category $\SH{S}^{\mathrm{eff}}$ of effective motivic spectra consists of the smallest stable subcategory stable under colimits that contains 
    \[\{\S^{p,q}\otimes \Sigma^{\infty}X_+\mid \ p\in\Z,\ q\geqslant 0,\ X\in \mathrm{Sm}_S\}\] with $\mathrm{Sm}_S$ the category of smooth $S$-schemes. We say that an object $M\in\SH{S}$ is $t$-effective if $\S^{0,-t}\otimes M\in \SH{S}^{\mathrm{eff}}$. The $\infty$-category of $t$-effective motivic spectra is colocalizing, and we denote by $f_t$ the colocalization functor (\emph{i.e.} the composition $f_t = iR$ of the inclusion $i$ with its right adjoint $R$). The co-unit induces a map $f_t M\to M$ for each $M\in\SH{S}$, called the $t$-effective cover. They induce the slice filtration on $M$. 
    The $t$-th slice of $M$ is the cofiber $s_t(M)$ of the morphism $f_{t+1}M\to f_{t}M$. 
    
    We define $H\Z$ as the zero-slice $s_0(\S)$ of the motivic sphere spectrum (note that this is indeed an $\mathbb{E}_\infty$-ring object of $\SH{S}$ by multiplicativity of the slice filtration $(f_t)_t$ \cite[Sect. 13.4]{bachmann2020norms}, and because $\S$ is effective). By \cite[Theorems 9.0.3 and 10.5.1]{LevineConiveau}, if $S$ is the spectrum of a field, then $H\Z$ represents Voevodsky's motivic cohomology constructed out of Bloch's cycle complex, and agrees with the motivic Eilenberg--Mac Lane spectrum $H\Z$ constructed out of sheaves with transfers in \cite[Section 4.1]{hoyois2015hopkinsmorel}. By \cite[Theorem B.4]{bachmann2020norms}, if $S$ is essentially smooth over a Dedekind domain, then $H\Z$ is Spitzweck's motivic cohomology spectrum (\cite{zbMATH07015021}), which is stable under base change between Dedekind domains or their points (\cite[Section 9]{zbMATH07015021}), thus we have that the map $\pi^*H\Z\to H\Z$ is an equivalence, where $\pi\colon S\to\Spec(\Z)$ is the structural morphism. In fact, for any map of schemes $f\colon Y\to X$ of schemes, the map $f^*H\Z \to H\Z$ is an equivalence in $\SH{X}$ by \cite[Theorem 9.3]{bachmann2025mathbba1invariantmotiviccohomologyschemes}. 
\end{recoll}
\begin{notation}
    \label{nota:top}
    Let $\Cat C$ be a presentably symmetric monoidal $\infty$-category. We denote by $(-)^\mathrm{top}\colon \Sp\to \Cat C$ the unique symmetric monoidal left adjoint.
\end{notation}
\begin{lem}
    The ring object $H\Z\in\SH{\Spec(\Z)}$ is $\Z$-linear.
\end{lem}
\begin{proof}
    By $\Z$-linear, we mean that there exists a ring map  $H\Z^\mathrm{top}\to H\Z$ with $H\Z^\mathrm{top}$ the image of the topological Eilenberg--Mac Lane spectrum $H\Z\in\Sp$ by the functor introduced in \cref{nota:top}. First, note that $H\Z\otimes \Q$ always has a $\Z$-linear structure as it is even $\Q$-linear. Next, by \cite[Lemma 13.7]{bachmann2020norms}, we know that for each nonzero integer $m$, the ring $H\Z/m$ (the $\mathbb{E}_\infty$-ring structure has been constructed by Spitzweck in \cite{zbMATH07015021}) is in fact an object of the heart of the effective t-structure on effective motivic spectra: $H\Z/m\in \SH{\Spec(\Z)}^{\mathrm{eff}\heartsuit}$. As any Grothendieck abelian category is $\Z$-linear, this provides a $\Z$-linear structure $H\Z^\mathrm{top}\to H\Z/m$ on each $H\Z/m$, thus also at the limit, for each prime number $\ell$, a map $H\Z^\mathrm{top}\to H\Z^\wedge_\ell$. Because the $\Z$-linear structure on a rational object is unique ($\Q$ being idempotent), the square 
    \[\begin{tikzcd}
	{H\Z^\mathrm{top}} & {\prod_{\ell}H\Z^\wedge_\ell} \\
	{H\Z\otimes\Q} & {\Big(\prod_{\ell}H\Z^\wedge_\ell\Big)\otimes\Q}
	\arrow[from=1-1, to=1-2]
	\arrow[from=1-1, to=2-1]
	\arrow[from=1-2, to=2-2]
	\arrow[from=2-1, to=2-2]
\end{tikzcd}\] commutes, thus the fracture square of $H\Z$ (combine \cite[Corollary 7.3]{mattis2024fracture} with \cite[Corollary 3.2.2.5]{higheralgebra}) provides us with a map of rings $H\Z^\mathrm{top}\to H\Z$, finishing the proof.
\end{proof}

\begin{defn}
    Write $H\Z_{\et} \coloneqq L_{\et} H\Z \in \SHet{S}$ for the étale motivic cohomology spectrum.
    Since $L_{\et}$ and $H\Z$ are compatible with pullbacks, we see that $H\Z_{\et}$ is compatible with pullbacks.
\end{defn}

\begin{prop}
    \label{prop:LetHZ=HLetZ}
     \label{cor:over-Z-HZ=HZtop}
    In $\SHet{\Spec(\Z)}$, the natural map $H\Z^\mathrm{top}\to H\Z_{\et}$ is an equivalence. 
\end{prop}
\begin{proof}
    That such a map exists follows from the fact that $H\Z$ is $\Z$-linear.
    As both sides are compatible with the pullback functor along $f \colon \Spec(k)\to \Spec(\Z)$ where $k=\finfld{p}$ or $k=\Q$, and as this family of functors is conservative by \cite[Corollary 5.12]{Bachmann2021etalerigidity}, it suffices prove the result over a field. It suffices to prove that the map is an equivalence upon $- \otimes \Q$ or $- / p$ for each prime number $p$. Rationally, we have that $H\Z_{\et}\otimes \Q \simeq H_{\mathfrak{B}}$ is Beilinson's motivic cohomology spectrum by \cite[Corollary 16.1.7]{cisinski2019triangulated}, which turns out to be $H\Q^\mathrm{top}$ as a consequence of \cite[Theorem 16.2.18]{cisinski2019triangulated} (compare the two possible images of the unit under the right adjoint of $\SHet{S}\to \mathrm{DA}^{\et}(S,\Q)$ using the identification of the theorem). Mod $p$ for $p$ a prime number, it suffices to check that the map is an equivalence after applying the functors $\pi_i\map{\SHet{k}}{\Gm^{\otimes j}\otimes \Sigma^\infty X_+}{-}$ for $X$ smooth over $k$ and $i,j\in\Z$. 
    Using rigidity as in \cref{lem:recap:rigidity}, if $p=\mathrm{char}(k)$ both sides are zero. Otherwise, for $H\Z^\mathrm{top}/p$ we obtain $\mathrm{H}^{-i-j}(X_{\et},\mu_{p}^{\otimes -j})$, which is isomorphic to the right-hand side as a consequence of the proof of the Bloch-Kato conjecture (see e.g.\ \cite[Theorem 1.2 5.]{zbMATH02132004}).
\end{proof}
Since everything is pulled back from $\Spec{\Z}$, we immediately obtain the following:
\begin{cor} \label{prop:LetHZ=HLetZ-any-scheme}
    Let $S$ be a scheme. Then $H\Z$ is $\Z$-linear in $\SH{S}$ and the canonical map $H\Z^\mathrm{top}\to H\Z_{\et}$ is an equivalence.
\end{cor}

\section{Nilpotence of \texorpdfstring{$\eta$}{eta}}

\begin{defn}
Let $S$ be a scheme. The algebraic Hopf map over $S$ is the map 
\[\eta\colon\Gm\to \S \] in $\SH{S}$ obtained as the $\mathbb{P}^1$-desuspension of the quotient map 
$\mathbb{A}^2_S\setminus\{0\} \to \mathbb{P}^1_S$.
\end{defn}
\begin{rmk}
    By smooth base change, if $f\colon T\to S$ is any map of schemes, then $f^*\eta \simeq \eta$ in $\SH{T}$.
\end{rmk}
We immediately get the following equivalence between the $\eta$-periodization and the mapping telescope.
\begin{prop} \label{lem:telescope-for-eta}
    Let $S$ be a scheme and $X \in \SHet{S}$. Then $X[\eta^{-1}] \cong M_{\eta}(X)$.
\end{prop}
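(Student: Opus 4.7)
The plan is to verify the hypotheses of \cref{lem:telescope} and to appeal to it. I would take $\Cat D = \SHet{\Spec(\Z)}$, with reference map $\widetilde{\eta} = \eta \colon \Gm \to \S$ (whose source is tensor-invertible), and with $L$ given by the pullback $f^{*} \colon \SHet{\Spec(\Z)} \to \SHet{S}$ along the unique structure morphism $f \colon S \to \Spec(\Z)$.

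To verify the hypotheses: first, $\SHet{\Spec(\Z)}$ is compactly generated by combining \cref{lem:recap:Z} with \cref{lem:recap:compact}. Second, the functor $f^{*}$ is a symmetric monoidal left adjoint (its right adjoint is $f_{*}$) and preserves tensor-invertibility, so $f^{*}\Gm$ is invertible. Finally, by smooth base change---as already remarked just after the definition of the algebraic Hopf map---one has $f^{*}\eta \simeq \eta$ in $\SHet{S}$. Applying \cref{lem:telescope} directly yields $X[\eta^{-1}] \cong M_{\eta}(X)$ for every $X \in \SHet{S}$.

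The main choice here is that of the compactly generated ``absolute'' base $\SHet{\Spec(\Z)}$. One might hope to simply take $\Cat D = \SH{S}$ and $L = L_{\et}$, but compact generation of $\SH{S}$ is not uniformly convenient for arbitrary (possibly non-qcqs or non-étale-bounded) $S$, whereas \cref{lem:recap:Z} and \cref{lem:recap:compact} together give compact generation of $\SHet{\Spec(\Z)}$ on the nose. Beyond this choice of $\Cat D$ there is no real obstacle; the proof is a direct application of the telescope lemma from the previous section.
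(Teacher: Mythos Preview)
Your proof is correct and follows essentially the same approach as the paper: choose $\Cat D = \SHet{\Spec(\Z)}$, use the pullback $f^*$ along the structural map as $L$, invoke compact generation of $\SHet{\Spec(\Z)}$ via \cref{lem:recap:Z,lem:recap:compact}, note $f^*\eta \simeq \eta$, and apply \cref{lem:telescope}. The paper's proof is more terse but identical in content.
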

\begin{proof}
    By the last remark, for the canonical left adjoint $f^* \colon \SHet{\Spec(\Z)} \to \SHet{S}$,
    we have $f^* \eta \simeq \eta$.
    Hence, the result follows from \cref{lem:telescope},
    since $\SHet{\Spec(\Z)}$ is compactly generated by \cref{lem:recap:compact}.
\end{proof}
\begin{lem}
    \label{lem:eta-2-tors}
    Assume that $-1$ is a sum of squares on a scheme $S$. Then $\eta = 0$ in $\SH{S}[\frac{1}{2}]$.
\end{lem}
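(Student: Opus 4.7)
The plan is to exploit Morel's universal relation $h\eta = 0$, where $h \coloneqq 1 + \langle -1\rangle$ is the hyperbolic class of $\S$, viewed as an endomorphism via the $\Gm$-action of $-1$. This identity holds over $\Spec(\Z)$ and therefore pulls back to $\SH{S}$ for every scheme $S$. Rewriting, it gives $\langle -1\rangle\eta = -\eta$, equivalently $(1 - \langle -1\rangle)\eta = 2\eta$, as maps $\Gm \to \S$ in $\SH{S}$.

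Next, I would use the hypothesis $-1 = a_1^2 + \cdots + a_n^2$ in $\mathcal{O}(S)$ to produce a relation of the form $2^N(1 - \langle -1\rangle) = 0$ in the endomorphism ring $[\S, \S]_{\SH{S}}$, for some $N \geq 0$. When $-1 = a^2$ is a single square, one has $\langle -1\rangle = \langle a^2\rangle = \langle 1\rangle$ via the isometry $x \mapsto ax$, so $N = 0$ suffices. When $-1 = a^2 + b^2$, the matrix $\left(\begin{smallmatrix} a & b \\ -b & a\end{smallmatrix}\right)$ witnesses an explicit isometry between the rank-$2$ forms $\langle -1, -1\rangle$ and $\langle 1, 1\rangle$, yielding $2\langle 1\rangle = 2\langle -1\rangle$ in the Grothendieck--Witt group and hence $N = 1$. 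In general, Pfister's classical theorem that a finite level is a power of $2$ guarantees that $-1$ is also a sum of $2^k$ nonzero squares for some $k$, and a higher-dimensional analogue of the above matrix yields the relation with $N = k$.

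Combining the two steps gives $2^{N+1}\eta = 2^N\cdot 2\eta = 2^N(1 - \langle -1\rangle)\eta = 0$ in $\SH{S}$. Thus $\eta$ is $2$-power torsion as a map $\Gm \to \S$, and in particular $\eta = 0$ in $\SH{S}[1/2]$.

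The main obstacle is making the identity $2^N(1 - \langle -1\rangle) = 0$ rigorous as an equality of honest endomorphisms of $\S \in \SH{S}$, rather than merely of classical Grothendieck--Witt classes. Over a perfect field this is automatic from Morel's isomorphism $[\S, \S]_{\SH{k}} \cong \operatorname{GW}(k)$. Over a general base scheme $S$, one must either construct the isometry directly as a motivic equivalence --- the matrices above literally give isomorphisms of the relevant $\Gm$-torsors over $S$, which should lift to equalities of the corresponding self-maps of $\S$ in $\SH{S}$ --- or invoke a general realisation map $\operatorname{GW}(S) \to [\S, \S]_{\SH{S}}$ coming from the theory of motivic Thom spectra.
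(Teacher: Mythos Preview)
Your strategy is sound and in fact very close to the paper's: the relation $(1-\langle -1\rangle)\eta = 2\eta$ you derive is equivalent, after inverting $2$, to the identity $\varepsilon\eta=\eta$ that the paper quotes from Cisinski--D\'eglise (their $\varepsilon$ is the involution $-\langle -1\rangle$, and the decomposition $\SH{S}[\tfrac12]\simeq\SH{S}[\tfrac12]^{+}\times\SH{S}[\tfrac12]^{-}$ is precisely the idempotent splitting along $\tfrac{1\pm\varepsilon}{2}$). So your first step and the paper's first step coincide.

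The gap is in your second step. The explicit matrix $M$ with $M^{T}M=-I_{n}$ and entries polynomial in the $a_i$ exists for $n=2,4,8$ via the complex, quaternionic and octonionic norm forms, but not beyond: Hurwitz's theorem rules out such composition identities for $n\ge 16$. Pfister's multiplicativity does yield an isometry $2^{k}\langle 1\rangle\cong 2^{k}\langle -1\rangle$ whenever $-1$ is represented, but only over a \emph{field}, and the isometry is produced by Witt cancellation rather than by a universal matrix; it need not lift to the ring $\Z[x_1,\dots,x_n]/(\sum x_i^{2}+1)$. Invoking a map $\operatorname{GW}(S)\to[\S,\S]_{\SH{S}}$ does not help, because the identity $2^{N}(1-\langle -1\rangle)=0$ is just as hard to establish in $\operatorname{GW}$ of a general ring as in $[\S,\S]$. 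So as written, your argument only proves the lemma when $-1$ is a sum of at most eight squares on $S$.

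The paper closes exactly this gap by forcing a reduction to fields: first pass to the universal noetherian example $\Spec\bigl(\Z[x_1,\dots,x_n]/(\sum x_i^{2}+1)\bigr)$, then use conservativity of pullback to residue fields to check $\SH{S}[\tfrac12]^{-}=0$ pointwise, where Morel's identification $\operatorname{End}(\S)\cong\operatorname{GW}(k)$ and the classical fact that $W(k)$ is $2$-primary torsion when $-1$ is a sum of squares finish the job. If you graft this reduction onto your argument, it becomes correct---and essentially identical to the paper's proof.
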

\begin{proof}
    If $-1$ is a sum of $n$ squares in $S$, then there is a map of rings 
    \[\Z[x_1,\dots,x_n]/(x_1^2+\dots x_n^2+1)\to \Cat O_S(S).\] 
    Thus, we may assume that $S$ is of finite type over $\Spec(\Z)$.
    By \cite[Lemma 16.2.3]{cisinski2019triangulated} (in \emph{loc.~cit.~}they invert all primes, but the proof works \emph{verbatim} with only $2$ inverted, see their \cite[Remark 16.2.12]{cisinski2019triangulated}), there is some idempotent element $\varepsilon \in \mathrm{End}_{\SH{S}[\frac{1}{2}]}(\S[\frac{1}{2}])$ such that $\eta = \varepsilon \eta$. 
    In particular, $\SH{S}[\frac{1}{2}] \simeq \SH{S}[\frac{1}{2}]^+\times \SH{S}[\frac{1}{2}]^-$ where the $+$ part (\emph{resp}.\ the $-$ part) consists of modules over $\mathrm{Im}\frac{1-\varepsilon}{2}$ (\emph{resp}.\ modules over $\mathrm{Im}\frac{1+\varepsilon}{2}$). 
    The image of $\eta$ in $\SH{S}[\frac{1}{2}]^+$ is zero, since $\frac{1-\epsilon}{2} \eta = \frac{\eta - \eta}{2} = 0$. Thus, it suffices to show that 
    $\SH{S}[\frac{1}{2}]^- = 0$. By \cite[Proposition 4.3.17]{cisinski2019triangulated} (that we may apply thanks to \cite[Proposition 2.5.11]{AGV6FF}) we may assume that $S = \Spec(k)$ is the spectrum of a field.

    Over a field, the splitting of $\SH{k}[\frac{1}{2}]$ is induced by a splitting of the
    endomorphisms of the unit $GW (k)[\frac{1}{2}] \cong \Z[\frac{1}{2}] \times W (k)[\frac{1}{2}]$, where $GW(k)$ and $W(k)$ 
    are the Grothendieck-Witt ring and the Witt ring of $k$ (see e.g.\ \cite[§2.7.3]{bachmann2020eta}). 
    Hence, it suffices to show that under our assumptions $W (k)$ has $2$-power torsion. 
    This is for example proven in \cite[Chapter 2, Theorem 7.1]{scharlau2012quadratic}.
\end{proof}

\begin{prop}
    \label{prop:eta4-is-zero-w-contractible}
    Let $S=\Spec(\overline{\mathbb{Z}})$ be the spectrum of the integral closure of $\Z$ in $\overline{\Q}$.
    The map $\eta^4 \colon \Gm^{\otimes 4} \to \S$ is null in $\SHet{S}$.
\end{prop}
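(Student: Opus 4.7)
The plan is to decompose the problem along the arithmetic fracture $\S \simeq \S^\wedge_2 \times_{\S^\wedge_2[1/2]} \S[1/2]$ of the target sphere, handle the $2$-periodic and $2$-adic parts separately, and then assemble.

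For the $2$-periodic half, observe that $\overline{\Z}$ contains $i$, so $-1 = i^2$ is a sum of squares on $S$. By \cref{lem:eta-2-tors} this gives $\eta = 0$ already in $\SH{S}[\tfrac{1}{2}]$, and therefore in $\SHet{S}[\tfrac{1}{2}]$; in particular $\eta^4 = 0$ there.

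For the $2$-adic half, \cref{lem:recap:rigidity} yields $\SHet{S}^\wedge_2 \simeq \ShvTop{\et}{\Spec(\overline{\Z}[1/2]), \Sp}^\wedge_2$. The key step is to show that $\Spec(\overline{\Z}[1/2])$ is \emph{$2$-adically étale-contractible}, so that global sections induces an equivalence $\ShvTop{\et}{\Spec(\overline{\Z}[1/2]), \Sp}^\wedge_2 \simeq \Sp^\wedge_2$. For this, write $\overline{\Z}[1/2] = \colimil{K} \mathcal{O}_K[1/2]$ over the cofinal family of totally imaginary number fields $K \supset \Q(i)$, each of $2$-cohomological dimension at most $2$, and apply continuity of étale cohomology. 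The relevant obstructions all vanish in the colimit: $\mathrm{Pic}(\overline{\Z}[1/2]) = 0$, the unit group $\overline{\Z}[1/2]^\times$ is $2^n$-divisible (any $2^n$-th root of a unit is integral, hence lies in $\overline{\Z}[1/2]$), and $\mathrm{Br}(\overline{\Z}[1/2]) = 0$; similarly every finite étale cover over some $\mathcal{O}_K[1/2]$ splits after enlarging $K$ since $\overline{\Q}$ is algebraically closed, giving $\pi_1^{\et}(\Spec(\overline{\Z}[1/2])) = 0$. Under the resulting equivalence, $\Gm$ corresponds to an invertible object of topological shift $1$ (the Tate twist is non-canonically trivial here, as the Galois action on $2$-power roots of unity is trivial over $\overline{\Z}$), so $\Gm^{\otimes 4} \simeq \S^\wedge_2[4]$ and
\[\pi_0 \map{\SHet{S}^\wedge_2}{\Gm^{\otimes 4}}{\S^\wedge_2} \cong \pi_4 \S^\wedge_2 = 0,\]
using the topological input $\pi_4\S = 0$. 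Hence $\eta^4 = 0$ in $\SHet{S}^\wedge_2$.

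To assemble, apply $\map{\SHet{S}}{\Gm^{\otimes 4}}{-}$ to the arithmetic fracture pullback of $\S$ to obtain a pullback of mapping spectra. In the associated long exact sequence, the obstruction to combining the two null-homotopies of $\eta^4$ from the two corners lies in $\pi_1 \map{\SHet{S}}{\Gm^{\otimes 4}}{\S^\wedge_2[1/2]}$, which vanishes because it is a shift-and-localization of $\pi_5 \S = 0$. Therefore $\eta^4 = 0$ in $\SHet{S}$. I expect the main obstacle to be the $2$-adic étale-contractibility of $\Spec(\overline{\Z}[1/2])$; granted this, everything else is a clean rigidity-plus-fracture assembly whose topological input is $\pi_4\S = 0$ (equivalently $\eta^4_{\mathrm{top}} = 0$).
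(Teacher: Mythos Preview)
Your overall strategy is exactly the paper's: arithmetic fracture at $2$, kill the $[1/2]$-part via \cref{lem:eta-2-tors} (using $-1=i^2$), and reduce the $2$-adic part through rigidity and triviality of the Tate twist (all $2$-power roots of unity are present) to the topological input $\pi_4(\S)=\pi_5(\S)=0$. The assembly step and the identification of the obstruction with (a localization of) $\pi_5$ also match.

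There is, however, one genuine overstatement. The claimed equivalence
\[
\ShvTop{\et}{\Spec(\overline{\Z}[1/2]),\Sp}^\wedge_2 \simeq \Sp^\wedge_2
\]
is false: for any closed point $i\colon x\hookrightarrow \Spec(\overline{\Z}[1/2])$ the skyscraper $i_* H\Z/2$ is a $2$-complete étale hypersheaf of spectra whose stalk at any other point vanishes, so it is not in the essential image of the constant-sheaf functor. What you actually need, and what your listed ingredients ($\pi_1^{\et}=0$, units $2$-divisible, $\mathrm{Pic}=\mathrm{Br}=0$) are really establishing, is only \emph{full faithfulness} of $f^*$, i.e.\ that $\mathrm{R}\Gamma\bigl(\Spec(\overline{\Z}[1/2])_{\et},\,f^*E\bigr)\simeq E$ for $E\in\Sp^\wedge_2$. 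Since both $\S^\wedge_2$ and $(\Gm)^\wedge_2\simeq \S^\wedge_2[1]$ lie in the image of $f^*$, full faithfulness is exactly enough to identify the relevant mapping groups with $\pi_4(\S^\wedge_2)$ and $\pi_5(\S^\wedge_2)$. The paper packages this as ``the étale cohomological dimension of $S[1/2]$ is zero'' and runs the descent spectral sequence to the same effect. So the gap is in the framing rather than the mathematics: drop the equivalence claim and argue instead that $H^{>0}_{\et}\bigl(\Spec(\overline{\Z}[1/2]),A\bigr)=0$ for constant finite $2$-primary $A$, which your continuity-plus-vanishing sketch does support.
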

\begin{proof}
    We begin with the observation that $S$ is étale bounded: 
    its residual fields are $\overline{\Q}$ and copies of $\overline{\mathbb{F}}_p$ for all prime numbers $p$, which,
    as they are algebraically closed, have étale cohomological dimension $0$.
    Consider the arithmetic fracture square 
    \begin{center}
        \begin{tikzcd}
            \S \ar[r] \ar[d] \cartsymb & 
              \S^\wedge_2 \ar[d] \\
             \S[1/2] \ar[r] & \S^\wedge_2[1/2]
        \end{tikzcd}
    \end{center} in $\SHet{S}$ (cf.\ e.g.\ \cite[Corollary 7.3]{mattis2024fracture}).
    Mapping into this from $\Gm^{\otimes 4}$ gives the following cartesian square of mapping spectra:
    \begin{center}
        \begin{tikzcd}
            \map{}{\Gm^{\otimes 4}}{\S} \ar[r] \ar[d] \cartsymb& \map{}{\Gm^{\otimes 4}}{\S^\wedge_2} \ar[d] \\
            \map{}{\Gm^{\otimes 4}}{\S[1/2]} \ar[r] & \map{}{\Gm^{\otimes 4}}{\S^\wedge_2[1/2]} \rlap{.}
        \end{tikzcd}
    \end{center}
    On homotopy groups we get the following (part of a) long exact sequence:
    \[\begin{tikzcd}
        {\pi_1(\map{}{\Gm^{\otimes 4}}{\S^\wedge_2[1/2]})}\ar[r]& {\pi_0(\map{}{\Gm^{\otimes 4}}{\S})}\arrow[r]  &{\pi_0(\map{}{\Gm^{\otimes 4}}{\S^\wedge_2}) \oplus \pi_0(\map{}{\Gm^{\otimes 4}}{\S[1/2]})}
    .\end{tikzcd}\]
    By \cref{lem:recap:rigidity}, we have
    $\SHet{S}^\wedge_2\simeq \SHet{S[1/2]}^\wedge_2\simeq \ShvTopH{\et}{S[1/2],\Sp}^\wedge_2$, and the equivalence sends 
    $(\Gm)^\wedge_2$ to the object $\Sigma\S^\wedge_2$: indeed by \cite[Theorem 6.5]{Bachmann2021etalerigidity} $(\Gm)^\wedge_2$ is equivalent to the twisting spectrum $\hat{\mathds{1}}_2(1)[1]$, and by \cite[Theorem 3.6]{Bachmann2021etalerigidity} this twisting spectrum is equivalent to $\S^\wedge_2[1]$ when $S$ has all $2$-power roots of unity. 
    Recall also that $\Gm^{\otimes 4}$ is compact in $\SHet{S}$ (cf.\ \cref{lem:recap:compact}). 
    This allows us to rewrite the above exact sequence as:
    \[\begin{tikzcd}[column sep=0.8em]
        {\pi_5(\mathrm{R}\Gamma(S[1/2]_{\et},\S^\wedge_2))[1/2]}\ar[r] & {\pi_0(\map{}{\Gm^{\otimes 4}}{\S})}\arrow[r] &{\pi_4(\mathrm{R}\Gamma(S[1/2]_{\et},\S^\wedge_2)) \oplus \pi_0(\map{}{\Gm^{\otimes 4}[1/2]}{\S[1/2]})}.
    \end{tikzcd}\]  
    Write $(f,g)$ for the image of $\eta^4$ under the right map.

    It suffices to show that $f = 0 = g$, and moreover that $\pi_5(\mathrm{R}\Gamma(S[1/2]_{\et},\S^\wedge_2)) \cong 0$. Note first that because $S$ has all roots of unity, the map $g$ vanishes by \cref{lem:eta-2-tors}. We compute 
    $\pi_i(\mathrm{R}\Gamma(S[1/2]_{\et},\S^\wedge_2))$ for $i > 0$. Because the étale cohomological dimension of $S[1/2]$ is zero, the descent spectral sequence (\cite[Proposition 2.13]{Clausen_2021}, that we may apply because étale hypersheaves of spectra on $S$ are indeed Postnikov complete by \cite[Lemma 2.16]{Bachmann2021etalerigidity}, using that $S$ is étale bounded) 
    \[E_2^{p,q}=\mathrm{H}^p(S[1/2]_{\et},\pi_{-q}((\S^{\mathrm{top}})^\wedge_2))\Rightarrow \pi_{-p-q}(\mathrm{R}\Gamma(S[1/2]_{\et},\S^\wedge_2))\] ensures that 
    \[\pi_i(\mathrm{R}\Gamma(S[1/2]_{\et},\S^\wedge_2))\simeq \mathrm{H}^0(S[1/2]_{\et},\pi_i((\S^{\mathrm{top}})^\wedge_2)),\] with $\S^{\mathrm{top}}\in\Sp$ the topological sphere spectrum.
    Now, for both $i = 4$ and $i = 5$ we have that $\pi_i((\S^{\mathrm{top}})^\wedge_2) \cong 0$ (see e.g.\ the table after \cite[Definition 1.1.6]{ravenelgreen}),
    which implies $\pi_i(\mathrm{R}\Gamma(S[1/2]_{\et},\S^\wedge_2)) \cong 0$.
    This finishes the proof.
\end{proof}

Using a similar technique, we also have the following:

\begin{prop} \label{lem:codim-1-eta-4=0}
    Let $k$ be a field with $\mathrm{cd}_2(k) \leqslant 1$ and $\sup_{p \in \mathbb{P}} \mathrm{cd}_p(k) < \infty$ (\emph{e.g.\ }a finite field, cf.\ \cite[Chapter II, §3.3 (a)]{serre1994GaloisCohomology}, or a separably closed field). Then in $\SHet{k}$, we have $\eta^4=0$.
\end{prop}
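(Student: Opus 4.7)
The plan is to mimic the proof of \cref{prop:eta4-is-zero-w-contractible}: apply $\map{}{\Gm^{\otimes 4}}{-}$ to the arithmetic fracture square
\[
\begin{tikzcd}
    \S \ar[r] \ar[d] & \S^\wedge_2 \ar[d] \\
    \S[1/2] \ar[r] & \S^\wedge_2[1/2]
\end{tikzcd}
\]
in $\SHet{k}$, extract the resulting long exact sequence, and verify both that $\eta^4$ maps to zero in the $2$-complete and the $2$-inverted summands and that the preceding boundary term vanishes, so that $\eta^4$ itself must be zero.

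For the $[1/2]$-summand I argue as follows: the hypothesis $\sup_p \mathrm{cd}_p(k) < \infty$ implies $\mathrm{cd}_2(k) < \infty$, and by Artin--Schreier this forbids $k$ from being formally real, so $-1$ is a sum of squares in $k$. Then \cref{lem:eta-2-tors} gives $\eta = 0$ (a fortiori $\eta^4 = 0$) in $\SH{k}[\tfrac{1}{2}]$.

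For the $2$-complete summand I use rigidity (\cref{lem:recap:rigidity}) to pass to $\ShvTop{\et}{k,\Sp}^\wedge_2$, under which $(\Gm)^\wedge_2$ corresponds to the Tate-twisted spectrum $\hat{\mathds{1}}_2(1)[1]$. The decisive difference from \cref{prop:eta4-is-zero-w-contractible} is that $k$ need not contain all $2$-power roots of unity, so the Tate twist does not trivialize on the nose. I identify
\[
\map{}{\Gm^{\otimes 4}}{\S^\wedge_2} \simeq \mathrm{R}\Gamma(k_{\et}, \hat{\mathds{1}}_2(-4))[-4]
\]
and apply the descent spectral sequence (valid since $k$ is étale bounded by the finiteness hypothesis on the $\mathrm{cd}_p$). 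The condition $\mathrm{cd}_2(k) \leq 1$ restricts the spectral sequence to columns $p \in \{0,1\}$; the classical vanishings $\pi_4(\S_{\mathrm{top}}) = \pi_5(\S_{\mathrm{top}}) = 0$ then immediately yield $\pi_4(\mathrm{R}\Gamma) = 0$, killing the image of $\eta^4$ on this side.

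It remains to show that the boundary term $\pi_1(\map{}{\Gm^{\otimes 4}}{\S^\wedge_2[\tfrac{1}{2}]})$ vanishes. By compactness of $\Gm^{\otimes 4}$ in $\SHet{k}$ and commutation of $\mathrm{R}\Gamma$ with filtered colimits, this identifies with $\pi_5(\mathrm{R}\Gamma(k_{\et}, \hat{\mathds{1}}_2(-4)))[\tfrac{1}{2}]$. The only stem contributing to $\pi_5$ is $\pi_6(\S_{\mathrm{top}}) \cong \Z/2$, so $\pi_5(\mathrm{R}\Gamma)$ sits inside $\mathrm{H}^1(k, (\Z/2)(-4))$. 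The main subtle point I expect is the observation that the Tate twist acts trivially on $2$-torsion coefficients (since the mod-$2$ cyclotomic character $\Z_2^\times \to (\Z/2)^\times$ is trivial), so the group in question is just $\mathrm{H}^1(k, \Z/2)$; whether or not this vanishes, it is $2$-torsion, and hence dies after inverting $2$. Once this Tate-twist observation is in place, everything else is routine parallel to the proof of the previous proposition.
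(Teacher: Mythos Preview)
Your proposal is correct and follows essentially the same approach as the paper: apply $\map{}{\Gm^{\otimes 4}}{-}$ to the arithmetic fracture square, use \cref{lem:eta-2-tors} (together with the Artin--Schreier argument forcing $-1$ to be a sum of squares) to kill the $[1/2]$-component, and use rigidity plus the descent spectral sequence with $\mathrm{cd}_2(k)\le 1$ and $\pi_4(\S_{\mathrm{top}})=\pi_5(\S_{\mathrm{top}})=0$ to kill the $2$-complete component.

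The one genuine difference is in how you dispose of the boundary term $\pi_5(\mathrm{R}\Gamma(k_{\et},\hat{\mathds{1}}_2(-4)))[1/2]$. You show this group is zero outright, by reading off from the spectral sequence that $\pi_5(\mathrm{R}\Gamma)$ is (a subquotient of) $H^1(k,\Z/2)$, hence $2$-torsion, hence killed by $[1/2]$. The paper instead observes that $\eta^4$ lies in the image of the boundary map, which is a quotient of a $\Z[1/2]$-module and therefore itself a $\Z[1/2]$-module; since $\eta^4$ is $2$-power torsion (by \cref{lem:eta-2-tors}), it must vanish there. Both arguments are valid; yours is slightly more direct and in fact the paper records exactly your computation later in \cref{rmk:long-after-conj}. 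One small point: rather than invoking the mod-$2$ cyclotomic character to trivialise the twist on $\pi_6$, it is cleaner (and is what the paper does) to note that the stalk of $\pi_6(\hat{\mathds{1}}_2(-4))$ is $\Z/2$ and that $\mathrm{Aut}(\Z/2)=\{1\}$, so the sheaf is automatically constant regardless of how the twist interacts with homotopy.
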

\begin{proof}
    First note that because the $2$-cohomological dimension of $k$ is finite, $-1$ is a sum of squares in $k$:
    Indeed, suppose not. Then $k$ is orderable, and the absolute Galois group of $k$ contains 
    an element of order $2$. Thus, the $2$-cohomological dimension is infinite, cf.\ \cite[Chapitre II, §4.1, Proposition 10']{serre1994GaloisCohomology}.
    We begin as in \cref{prop:eta4-is-zero-w-contractible} (note that $\Spec(k)$ is étale bounded by assumption): 
    there is a short exact sequence 
        \[\begin{tikzcd}[column sep=0.8em]
        {\pi_{5}(\mathrm{R}\Gamma(k_{\et},\hat{\mathds{1}}_2(-4)   ))[1/2]}\ar[r] & {\pi_0(\map{\SHet{k}}{\Gm^{\otimes 4}}{\S})}\arrow[r] &{\pi_4(\mathrm{R}\Gamma(k_{\et},\hat{\mathds{1}}_2(-4))) \oplus \pi_0(\map{}{\Gm^{\otimes 4}[1/2]}{\S[1/2]})\rlap{,}}
    \end{tikzcd}\]  
    and we denote by $(f,g)$ the image of $\eta^4$ by the right map. By \cref{lem:eta-2-tors} we know that $g=0$. 
    We will now show that $f = 0$ be showing that the whole group $\pi_4(\mathrm{R}\Gamma(k_{\et},\hat{\mathds{1}}_2(-4)))$ vanishes.
    First, we need the following fact: $\pi_k (\hat{\mathds{1}}_2(-4))$ is $2$-power torsion for all $k > 0$ and vanishes if $k = 4$ or $k = 5$.
    For this, consider the t-exact conservative stalk functor $\rho^* \colon \ShvTopH{\et}{k_{\et}, \Sp} \to \ShvTopH{\et}{(k^\mathrm{sep})_{\et}, \Sp} = \Sp$.
    Note that $\rho^*(\hat{\mathds{1}}_2(-4)) \cong \hat{\mathds{1}}_2(-4) \cong \hat{\mathds{1}}_2$, the ($2$-completed) sphere spectrum,
    where we used \cref{lem:recap:rigidity} and \cite[Theorem 3.6 (2) and (3)]{Bachmann2021etalerigidity} (note that we do not need to re-$2$-complete after pulling back along $\rho$ because the identification can be made in the $\infty$-category of proétale sheaves, where $\rho^*$ commutes with limits as it is a slice).
    But now $\pi_k(\S)$ is torsion for every $k > 0$ by Serre's finiteness theorem \cite[Theorem 1.1.8]{ravenelgreen},
    and $\pi_4(\S) = \pi_5(\S) = 0$.

    Consider the descent spectral sequence (\cite[Proposition 2.13]{Clausen_2021}, again our sheaves are Postnikov complete by \cite[Lemma 2.16]{Bachmann2021etalerigidity})
    \begin{equation*}
        E_2^{p,q} = \pi_{-p} \mathrm{R}\Gamma(k_{\et}, \pi_{-q} (\hat{\mathds{1}}_2(-4))) \Rightarrow \pi_{-p-q} \mathrm{R}\Gamma(k_{\et}, \hat{\mathds{1}}_2(-4)).
    \end{equation*}
    Hence, to see that $\pi_4 \mathrm{R}\Gamma(k_{\et}, \hat{\mathds{1}}_2(-4)) = 0$,
    it suffices to show (using that $k_{\et}$ is of $2$-cohomological dimension $\le 1$ 
    and that $\pi_{n}(\hat{\mathds{1}}_2(-4))$ is $2$-power torsion for all $n > 0$) that $\pi_i (\hat{\mathds{1}}_2(-4)) = 0$ for $i = 4$ and $i = 5$.
    This we have seen above.

    Hence, we see that $\eta^4$ comes from an element in $\pi_{5}(\mathrm{R}\Gamma(k_{\et},\hat{\mathds{1}}_2(-4)))[1/2]$.
    Since $\eta$ (and hence $\eta^4$) is $2$-torsion by \cref{lem:eta-2-tors}, we conclude that $\eta^4 = 0$.
\end{proof}

\begin{cor}
    \label{prop:eta4-is-zero-aic}
    Let $S$ be a scheme with a map to a field $k$ with $\mathrm{cd}_2(k) \leqslant 1$ and $\sup_{p \in \mathbb{P}} \mathrm{cd}_p(k) < \infty$
    (e.g.\ any scheme of equicharacteristic $p > 0$, or any scheme defined over an algebraically closed field).
    The map $\eta^4 \colon \Gm^{\otimes 4} \to \S$ is null in $\SHet{S}$.
\end{cor}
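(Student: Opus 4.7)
The plan is to reduce the statement to \cref{lem:codim-1-eta-4=0}, which is exactly the case $S = \Spec(k)$, by pulling back along the given map $f \colon S \to \Spec(k)$. The key input is the remark immediately after the definition of $\eta$ above, which asserts that for any morphism of schemes $f \colon T \to S$ we have $f^{*}\eta \simeq \eta$ in $\SH{-}$, and hence (after étale sheafification, which is symmetric monoidal) also in $\SHet{-}$. Since tensor powers commute with the symmetric monoidal left adjoint $f^{*} \colon \SHet{k} \to \SHet{S}$, we also get $f^{*}(\eta^{4}) \simeq \eta^{4}$.

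The first step is therefore to apply \cref{lem:codim-1-eta-4=0} to the field $k$ to conclude that $\eta^{4} = 0$ as a morphism $\Gm^{\otimes 4} \to \S$ in $\SHet{k}$. The second step is to apply the functor $f^{*}$: since $f^{*}$ is exact (in particular sends the zero morphism to the zero morphism) and compatible with the internal Hopf map as above, we obtain
\[
\eta^{4} \;\simeq\; f^{*}(\eta^{4}) \;\simeq\; f^{*}(0) \;\simeq\; 0
\]
in $\SHet{S}$, which is the desired conclusion.

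There is essentially no obstacle here; the only thing worth a sentence in the writeup is the parenthetical list of examples. For the equicharacteristic $p > 0$ case, one takes $k = \mathbb{F}_p$ (which has $\mathrm{cd}_{2}(\mathbb{F}_p) \le 1$ and bounded cohomological dimension at all primes by the reference to Serre), and for a scheme over an algebraically closed field $\overline{k}$ one simply takes $k = \overline{k}$ (cohomological dimension $0$). In both cases the existence of the required map to $\Spec(k)$ is immediate, so the hypothesis is satisfied and the reduction above applies.
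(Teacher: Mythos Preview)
Your proposal is correct and matches the paper's own proof essentially verbatim: reduce to $S = \Spec(k)$ via $f^{*}\eta \simeq \eta$, then invoke \cref{lem:codim-1-eta-4=0}. The paper's proof is a single sentence doing exactly this, and your added justification of the parenthetical examples is accurate.
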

\begin{proof}
    Since $\eta^4$ pulls back to $\eta^4$ along the map $S \to \Spec(k)$,
    we may assume that $S = \Spec(k)$, in which case the result is \cref{lem:codim-1-eta-4=0}.
\end{proof}

\begin{rmk}
    \label{rmk:Z[i]-H2}
    One would hope that a similar proof shows that $\eta^4 = 0$ over $\Z[i]$.
    This does not quite work, as in the spectral sequence we get an additional nonzero term given by the étale cohomology group 
    $H^2_{\et}(\Spec(\Z[i]), \pi_6(\hat{\mathds{1}}_2(-4))) \neq 0$.
\end{rmk}
Nonetheless, we conjecture the following:
\begin{conj}\label{conj-eta-4}
    For any scheme $S$ we have $\eta^4 \cong 0$ in $\SHet{S}$.
\end{conj}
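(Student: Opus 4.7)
The plan is to reduce the conjecture to a descent-spectral-sequence computation on $\Spec(\Z[1/2])$. By the remark preceding the conjecture it is enough to prove $\eta^4 \cong 0$ in $\SHet{\Z}$. Using the arithmetic fracture square in $\SHet{\Z}$ together with \cref{lem:eta-2-tors} (which gives that $\eta$ already vanishes in $\SH{\Z}[\tfrac{1}{2}]$, and hence in $\SHet{\Z}[\tfrac{1}{2}]$), this reduces to showing $\eta^4 \cong 0$ in $\SHet{\Z}^{\wedge}_2$. By \cref{lem:recap:rigidity} this category identifies with $\ShvTop{\et}{\Z[1/2]_{\et}, \Sp}^{\wedge}_2$, in which $\Gm$ corresponds to the twisting spectrum $\hat{\mathds{1}}_2(1)[1]$. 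So the question becomes whether $\eta^4$ is zero in $\pi_4 \mathrm{R}\Gamma(\Z[1/2]_{\et}, \hat{\mathds{1}}_2(-4))$.

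I would then run the descent spectral sequence
\[E_2^{p,q} = \mathrm{H}^p(\Z[1/2]_{\et}, \pi_{-q}\hat{\mathds{1}}_2(-4)) \Rightarrow \pi_{-p-q}\mathrm{R}\Gamma(\Z[1/2]_{\et}, \hat{\mathds{1}}_2(-4)),\]
working étale-locally along $\Spec(\Z[i,\tfrac{1}{2}]) \to \Spec(\Z[\tfrac{1}{2}])$ and then descending through the Galois action of $\mathrm{Gal}(\Z[i,\tfrac{1}{2}]/\Z[\tfrac{1}{2}]) \cong \Z/2$ in order to make the spectral sequence converge. Using the stable stems $\pi_4 \S = \pi_5 \S = 0$, $\pi_6 \S = \Z/2$ and $\pi_7 \S = \Z/240$, the $E_2$-groups in total degree $-4$ that can contribute are $E_2^{2,-6} = \mathrm{H}^2(\Z[\tfrac{1}{2}]_{\et}, \Z/2)$ together with $E_2^{p,-p-4}$ for $p \geq 3$ (reflecting contributions from real places). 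Since $E_2^{0,-4} = 0$, the class $\eta^4$ a priori lives in filtration $\geq 2$, and the residual task is to show that its image in each $E_\infty^{p,-p-4}$ with $p \geq 2$ vanishes.

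The main obstacle, flagged in \cref{rmk:Z[i]-H2} for the case of $\Z[i]$, is that these higher $\mathrm{H}^p$ groups are nonzero on purely dimensional grounds, so the argument cannot conclude by killing the whole group $\pi_4\mathrm{R}\Gamma$. Instead one must exhibit a \emph{specific} null-homotopy of $\eta^4$ that is invisible in the associated graded. A natural strategy is to transplant the classical relation $\etatop^4 = \etatop \cdot (12\nu) = 12\,\etatop\nu = 0$ (the last equality because $\pi_4 \S = 0$) from $\Sp^{\wedge}_2$ to the étale setting: lift $\nu$ to a class in $\pi_3\mathrm{R}\Gamma(\Z[\tfrac{1}{2}]_{\et}, \hat{\mathds{1}}_2(-3))$, factor $\eta^4$ through the composition $\eta \cdot 12\nu$, and then use a Toda-bracket argument producing the secondary null-homotopy in a Galois-equivariant way for the twist $(-4)$. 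The hard part is exactly this Galois equivariance: the pointwise vanishing over $\overline{\Z}$ from \cref{prop:eta4-is-zero-w-contractible} only controls the filtration-zero piece, and genuinely new input beyond \cref{prop:eta4-is-zero-w-contractible,lem:codim-1-eta-4=0} seems necessary to handle the higher-filtration contributions.
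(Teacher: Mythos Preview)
The statement you are attempting to prove is a \emph{conjecture} in the paper; the authors do not prove it and explicitly flag the obstruction you describe (see \cref{rmk:Z[i]-H2} and \cref{rmk:long-after-conj}). So there is no ``paper's own proof'' to compare against, and your proposal should be read as a proof strategy rather than a proof. Indeed, you say as much yourself in the last paragraph: you identify the residual task of producing a Galois-equivariant secondary null-homotopy and note that ``genuinely new input\ldots\ seems necessary.'' That is an honest assessment; what you have written is not a proof but an outline of where the difficulty lies, and in that respect it matches the paper's own discussion of the obstruction.

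There is, however, a concrete error in your reduction step. You invoke \cref{lem:eta-2-tors} to conclude that $\eta$ vanishes in $\SH{\Z}[\tfrac12]$. That lemma requires $-1$ to be a sum of squares on $S$, which fails over $\Z$; and in fact $\eta$ is \emph{not} zero in $\SH{\Z}[\tfrac12]$, since pulling back to $\SH{\R}[\tfrac12]$ one has $W(\R)\cong\Z$ with no $2$-torsion. You may have in mind passing first to $\Z[i,\tfrac12]$ (where $-1$ is a square) and then descending along the $\Z/2$-Galois cover $\Spec(\Z[i,\tfrac12])\to\Spec(\Z[\tfrac12])$ in the \'etale category with $2$ inverted, but this is a separate argument that needs to be made; it is not what \cref{lem:eta-2-tors} says. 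Moreover, even granting $\eta=0$ in $\SHet{\Z}[\tfrac12]$, the phrase ``this reduces to showing $\eta^4\cong 0$ in $\SHet{\Z}^\wedge_2$'' skips the compatibility of null-homotopies across the fracture square: knowing $\eta^4=0$ in both corners does not by itself give $\eta^4=0$ in $\SHet{\Z}$ without control of the boundary term, exactly as in the careful argument of \cref{prop:eta4-is-zero-w-contractible}. So the reduction to the $2$-complete case is itself incomplete, before one even reaches the acknowledged obstruction in higher filtration.
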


\begin{rmk}
    \label{rmk:long-after-conj}
    \begin{enumerate}
    \item If the conjecture holds for $S$, and $f \colon S' \to S$ is a morphism,
        then the conjecture also holds for $S'$: Indeed, $\eta^4 \simeq f^* \eta^4 \simeq 0$.
   \item As noted in \cref{rmk:Z[i]-H2}, the obstruction for the argument in \cref{lem:codim-1-eta-4=0} to work for $\Q(i)$ lies in the apparition of a $\mathrm{H}^2_{\et}(\Q(i),\pi_6(\hat{\mathds{1}}_2(-4)))$. In fact, as the stalk of $\pi_6(\hat{\mathds{1}}_2(-4))$ is $\Z/2$ (thus $\pi_6(\hat{\mathds{1}}_2(-4))$ is the constant sheaf $\Z/2$ as $\mathrm{Aut}(\Z/2)= \{1\}$), and as $\pi_{5}(\mathrm{R}\Gamma(k_{\et},\hat{\mathds{1}}_2(-4)))[1/2]$ is always zero because it is the localization at $2$ of a $2$-torsion abelian group (this can again be read in the spectral sequence),  we obtain an isomorphism 
\[\pi_0(\map{\SHet{\Q(i)}}{\Gm^{\otimes 4}}{\S})\simeq \mathrm{Br}(\Q(i))[2]\] between the group in which lives $\eta^4$ and the $2$-torsion in the Brauer group of $\Q(i)$. By the Albert--Brauer--Hasse--Noether short exact sequence \cite[Theorem 4]{zbMATH00409910}, one may identify this right-hand side with 
\[\mathrm{Br}(\Q(i))[2]\simeq \mathrm{ker}(\bigoplus_p \mathrm{Br}(\Q_p(i))[2]\xrightarrow{\mathrm{inv}} \Q/\Z)\] and even better as each $\mathrm{Br}(\Q_p(i))[2]$ is isomorphic to $\Z/2$ (\cite[Chapter VI, §1.1, Theorem 1 and Corollary]{zbMATH03245597}), we see that we have an equivalence 
\[\pi_0(\map{\SHet{\Q(i)}}{\Gm^{\otimes 4}}{\S})\simeq \mathrm{ker}(\bigoplus_p \Z/2\xrightarrow{\mathrm{sum}} \Z/2).\]
The maps $\mathrm{Br}(\Q(i))[2]\ \to \mathrm{Br}(\Q_p(i))[2]$ are given by restriction, thus by the functoriality of the spectral sequence we used, we see that \emph{there is only finitely many prime numbers $p$ such that $\eta^4$ is nonzero in $\SHet{\Q_p(i)}$}. Moreover, that number of primes is even. This also implies that to prove \cref{conj-eta-4} for $\Spec(\Q(i))$ it suffices to do it for $\Spec(\Q_p(i))$ for all prime numbers. See \cref{rmk:eta4-zero-Rig} for an additional result in this direction.
\end{enumerate}
\end{rmk}

Using the result about $\Spec(\overline{\Z})$, we get the following weak version of the conjecture:

\begin{cor}
    \label{cor:exists-Galois-cover-eta4-zero}
    Let $S$ be a scheme. Then there exists a finite faithfully flat map $X\to S$ such that $\eta^4=0$ in $\SHet{X}$.
\end{cor}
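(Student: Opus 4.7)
The plan is to reduce the statement to \cref{prop:eta4-is-zero-w-contractible} (which handles $S=\Spec(\overline{\Z})$) via a continuity argument, and then to spread the result to an arbitrary base via finite flat base change. Writing $\Spec(\overline{\Z})=\lim{K}\Spec(\mathcal{O}_K)$ as the cofiltered limit over finite subextensions $K\subseteq\overline{\Q}$, with finite faithfully flat transition maps, the key technical input I will need is an equivalence
\[\SHet{\overline{\Z}}\simeq \colim{K}\SHet{\mathcal{O}_K}\]
in $\PrL$ coming from continuity of $\SHet{-}$ along cofiltered limits of qcqs schemes with affine transition maps.

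Once this equivalence is in place, the argument proceeds as follows. Each $\mathcal{O}_K$ is of finite type over $\Z$, which is étale locally étale bounded by \cref{lem:recap:Z}, so \cref{lem:recap:etale-morphism-etale-finite,lem:recap:compact} show that $\SHet{\mathcal{O}_K}$ is compactly generated. Since $\overline{\Z}$ is itself étale bounded (its residue fields being algebraically closed), \cref{lem:recap:compact} also guarantees that $\Gm^{\otimes 4}$ is compact in $\SHet{\overline{\Z}}$. The pullback functors $\SHet{\mathcal{O}_K}\to\SHet{\overline{\Z}}$ send compact generators to compact objects (suspension spectra of smooth schemes pull back to suspension spectra of base-changed smooth schemes), so the continuity equivalence can be upgraded to one of compactly generated categories with compact-preserving transition maps. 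This yields a filtered-colimit identification
\[\map{\SHet{\overline{\Z}}}{\Gm^{\otimes 4}}{\S}\simeq \colim{K}\map{\SHet{\mathcal{O}_K}}{\Gm^{\otimes 4}}{\S}.\]
Since $\eta^4$ vanishes in the left-hand group by \cref{prop:eta4-is-zero-w-contractible}, taking $\pi_0$ shows that $\eta^4$ already vanishes in $\SHet{\mathcal{O}_K}$ for some finite extension $K/\Q$. For a general scheme $S$, I set $X\coloneqq S\times_{\Spec(\Z)}\Spec(\mathcal{O}_K)$. Because $\mathcal{O}_K$ is a finite free $\Z$-module, the map $\Spec(\mathcal{O}_K)\to\Spec(\Z)$ is finite faithfully flat, and hence so is $X\to S$. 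The projection $X\to\Spec(\mathcal{O}_K)$ pulls $\eta^4$ back to $\eta^4$, so we conclude $\eta^4=0$ in $\SHet{X}$.

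The main obstacle will be establishing the continuity equivalence $\SHet{\overline{\Z}}\simeq\colim{K}\SHet{\mathcal{O}_K}$ compatibly with compact generators. This is a standard feature of the six-functor formalism for étale motivic spectra and should be extractable from \cite{AGV6FF}, but some care is required since we work throughout with hypercomplete étale sheaves; in particular one must verify that hyperdescent is preserved under the passage to the cofiltered limit, which is essentially the content of étale boundedness at each finite stage.
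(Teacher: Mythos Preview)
Your approach is essentially the paper's: reduce to $\Spec(\Z)$, use continuity to descend the vanishing of $\eta^4$ from $\overline{\Z}$ to a finite stage, then base change. The one genuine gap is the phrase ``étale boundedness at each finite stage.'' This fails as written: for a number field $K$ admitting a real embedding (already $K=\Q$) one has $\mathrm{cd}_2(K)=\infty$, so $\Spec(\mathcal{O}_K)$ is only étale \emph{locally} étale bounded, not étale bounded. \cref{lem:recap:compact} then gives compact generation of $\SHet{\mathcal{O}_K}$ but \emph{not} compactness of $\S$ or $\Gm^{\otimes 4}$ there, and without that you cannot conclude that
\[
\pi_0\map{\SHet{\overline{\Z}}}{\Gm^{\otimes 4}}{\S}\;\cong\;\colim{K}\,\pi_0\map{\SHet{\mathcal{O}_K}}{\Gm^{\otimes 4}}{\S},
\]
which is exactly what is needed to pull the nullhomotopy down to a finite level.

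The paper's fix is to first factor $\overline{\Z}$ through $\Z[i]$ and write it as a filtered colimit of finite $\Z[i]$-algebras $A_\alpha$: since $\Z[i]$ is étale bounded (\cref{lem:recap:Z}), each $A_\alpha$ is étale bounded by \cref{lem:recap:etale-morphism-etale-finite}, so $\S$ and $\Gm^{\otimes 4}$ are compact at every stage and the continuity argument \cite[Proposition~2.5.11]{AGV6FF} applies. Equivalently, in your framing, simply restrict to the cofinal subsystem of totally imaginary $K$ (e.g.\ those containing $\Q(i)$); then every $\mathcal{O}_K$ in the system is genuinely étale bounded and the rest of your outline goes through verbatim.
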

\begin{proof}
    We claim that it is enough to show this for $\Spec(\Z)$.
    Indeed, suppose we have found a finite faithfully flat map $X \to \Spec(\Z)$ such that $\eta^4=0$ in $\SHet{X}$.
    Then consider the pulled back finite faithfully flat map $X \times S \to S$. Since $\eta^4$ in $\SHet{X \times S}$ 
    is pulled back from $\SHet{X}$, it vanishes.

    By \cref{prop:eta4-is-zero-w-contractible}, the map $\widetilde{X}\coloneq\Spec(\overline{\Z})\to\Spec(\Z)$ is such that $\eta^4=0$ in $\SHet{\widetilde{X}}$. 
    Note that $\widetilde{X}\to \Spec(\Z)$ factors through $\Spec(\Z[i])$. 
    In particular, we may write $\overline{\Z}$ as a filtered colimit of finite algebras $A_\alpha$ over $\Z[i]$, 
    which are thus all étale bounded by \cref{lem:recap:etale-morphism-etale-finite,lem:recap:Z}.

    Hence, if we denote by $X_\alpha = \Spec(A_\alpha)$, the categories $\SHet{X_\alpha}$ and $\SHet{\widetilde{X}}$ are compactly generated, with $\Gm$ and $\S$ are compact in them, cf.\ \cref{lem:recap:compact}.
    Moreover, by continuity (\cite[Proposition 2.5.11]{AGV6FF}), the map \[\colimil{\alpha} \pi_0(\mathrm{map}_{\SHet{X_\alpha}}(\Gm^{\otimes 4},\S)) \to \pi_0(\mathrm{map}_{\SHet{\widetilde{X}}}(\Gm^{\otimes 4},\S))\] 
    is an isomorphism of abelian groups (see \cite[A.10]{zbMATH07329546}). As $\eta^4$ vanishes in the colimit, there exists a finite level, say $X_{\alpha_0}$, such that $\eta^4=0$ in $\SHet{X_{\alpha_0}}$. This finishes the proof.
\end{proof}

\begin{cor}
    \label{cor:eta-period-zero}
    The Hopf map $\eta$ is weakly nilpotent in $\SHet{\Spec(\Z)}$.
\end{cor}
\begin{proof}
    Since $\Spec(\Z)$ is étale locally étale bounded, cf.\ \cref{lem:recap:Z},
    pulling back along the geometric points of $\Spec(\Z)$ is conservative, cf.\ \cite[Corollary 5.12]{Bachmann2021etalerigidity}.
    As for any map of schemes $f \colon S \to T$ we have $f^*\S[\eta^{-1}] \simeq \S[\eta^{-1}]$ (since $f^* \eta \cong \eta$, $f^*$ commutes with colimits and the $\eta$-periodization is given by the mapping telescope, cf.\ \cref{lem:telescope-for-eta}),
    the claim follows from the version for algebraically closed fields, cf.\ \cref{prop:eta4-is-zero-aic}.
\end{proof}

With the above we can prove:
\begin{thm}
    \label{thm:eta-complete}
    Let $X$ be a scheme. Then for any $M\in\SHet{X}$, we have $M[\eta^{-1}] \cong 0$, and the map 
    $M\to M^\wedge_\eta$ to its $\eta$-completion is an equivalence. 
    In particular, if $M$ is a compact object in $\SHet{X}$, there exists an integer 
    $n$ such that $\eta^n\colon M\otimes \Gm^{\otimes n}\to M$ vanishes.
\end{thm}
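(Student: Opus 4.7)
The plan is to bootstrap everything from the vanishing $\S[\eta^{-1}] \simeq 0$ in $\SHet{\Spec(\Z)}$ already established in \cref{cor:eta-period-zero}.

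First, I would extend the vanishing of the $\eta$-periodization to an arbitrary scheme $X$. Let $f \colon X \to \Spec(\Z)$ be the unique morphism. Because $f^*$ is a symmetric monoidal left adjoint satisfying $f^*\eta \simeq \eta$, the mapping telescope description of \cref{lem:telescope-for-eta} gives $f^*\S[\eta^{-1}] \simeq \S[\eta^{-1}]$ in $\SHet{X}$, so the latter vanishes by \cref{cor:eta-period-zero}. Since the $\eta$-periodization is smashing, I conclude $M[\eta^{-1}] \simeq M \otimes \S[\eta^{-1}] \simeq 0$ for every $M \in \SHet{X}$.

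Second, I would upgrade this to the statement that $M \to M^\wedge_\eta$ is an equivalence. Let $F$ be the fiber of this map. Since $M \to M^\wedge_\eta$ is an $\eta$-equivalence by construction of the Bousfield localization, applying the exact functor $(-)\sslash\eta$ and using \cref{lem:nu-equiv-iff-fiber} shows that $F$ is $\eta$-periodic. The $\eta$-periodic objects form the essential image of $(-)[\eta^{-1}]$, so $F \simeq F[\eta^{-1}]$, and the previous step gives $F \simeq 0$. Therefore $M \simeq M^\wedge_\eta$.

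For the final clause, if $M$ is compact then combining $M[\eta^{-1}] \simeq 0$ with the telescope identification $M[\eta^{-1}] \simeq M_\eta(M)$ from \cref{lem:telescope-for-eta} puts me directly in the setting of \cref{lem:nu:cmpt-nilpotent}, which produces an integer $n$ such that $\eta^n \colon \Gm^{\otimes n} \otimes M \to M$ is null. There is essentially no new difficulty at this stage: the entire substance of the argument is concentrated in \cref{cor:eta-period-zero} (and ultimately in the $\eta^4 = 0$ computation over algebraically closed fields of residue characteristic zero), and the present theorem is a purely formal consequence of that vanishing together with the smashing nature of the periodization and the general compactness formalism.
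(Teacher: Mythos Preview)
Your proof is correct and follows essentially the same route as the paper: pull back $\S[\eta^{-1}]$ from $\Spec(\Z)$ via the telescope description, use that periodization is smashing to kill $M[\eta^{-1}]$, then invoke \cref{lem:nu-equiv-iff-fiber} to see the fiber of $M\to M^\wedge_\eta$ is $\eta$-periodic and hence zero, and finish with \cref{lem:nu:cmpt-nilpotent} for compact $M$. The only inaccuracy is in your closing parenthetical: \cref{cor:eta-period-zero} relies on \cref{prop:eta4-is-zero-aic} at all geometric points of $\Spec(\Z)$, including the positive-characteristic ones, not just the characteristic-zero point.
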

\begin{proof}
    Recall that the $\eta$-periodization is given by the mapping telescope,
    cf.\ \cref{lem:telescope-for-eta}.
    For the first part, using that the fiber of the map 
    $M\to M^\wedge_\eta$ is $\eta$-periodic (cf.\ \cref{lem:nu-equiv-iff-fiber}), we see that it suffices to check that any $\eta$-periodic object vanishes. 
    If $N\in\SHet{X}$ is an $\eta$-periodic object, 
    we have $N = N\otimes \S[\eta^{-1}]$, and denoting
    by $f\colon X\to \Spec(\Z)$ the structural morphism, 
    we have $\S[\eta^{-1}]\simeq f^*\S[\eta^{-1}]=0$, thus $N=0$.
    The second part now follows from \cref{lem:nu:cmpt-nilpotent}.
\end{proof}

\begin{cor} \label{lem:etale-descent-already-eta-complete}
    Let $S$ be a scheme.
    The étale sheafification functor $L_{\et} \colon \SH{S} \to \SHet{S}$
    factors canonically over $\SH{S}^{\wedge}_{\eta}$. In particular, any object of $\SH{S}$ that satisfies étale descent
    is already $\eta$-complete.
\end{cor}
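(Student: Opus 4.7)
The plan is to reduce the corollary to \cref{thm:eta-complete} by purely formal manipulation of adjoints and Bousfield localizations; the main theorem is the only substantive input, and I do not anticipate any real obstacle.

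For the factorization of $L_{\et}$, the first step is to observe that $L_{\et}$ is a symmetric monoidal left adjoint which carries the Hopf map $\eta \colon \Gm \to \S$ in $\SH{S}$ to the Hopf map $\eta$ in $\SHet{S}$ (immediate, since $\eta$ is constructed from finite-type motivic spaces that are preserved by sheafification). Consequently, if $N \in \SH{S}$ is $\eta$-periodic, then $L_{\et} N$ is $\eta$-periodic in $\SHet{S}$, hence zero by \cref{thm:eta-complete}. Using that $L_{\et}$ is exact together with \cref{lem:nu-equiv-iff-fiber}, this shows that $L_{\et}$ sends $\eta$-equivalences to equivalences. By the universal property of the Bousfield localization $(-)^{\wedge}_{\eta} \colon \SH{S} \to \SH{S}^{\wedge}_{\eta}$, the functor $L_{\et}$ then factors canonically through $\SH{S}^{\wedge}_{\eta}$.

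For the ``in particular'' claim, I would identify the objects of $\SH{S}$ satisfying étale descent with the essential image of the fully faithful right adjoint $R \colon \SHet{S} \hookrightarrow \SH{S}$ of $L_{\et}$, so it suffices to show that every object of the form $RY$ is $\eta$-complete. For any $\eta$-periodic $N \in \SH{S}$ and any $Y \in \SHet{S}$, the adjunction gives
\[
\Map{\SH{S}}{N}{RY} \simeq \Map{\SHet{S}}{L_{\et} N}{Y} = 0,
\]
since $L_{\et} N = 0$ by the previous paragraph. Thus $RY$ is right-orthogonal to every $\eta$-periodic object, which is precisely the condition of being $\eta$-complete. Applying this to $Y = L_{\et} X$ for an $X$ satisfying étale descent yields the claim.
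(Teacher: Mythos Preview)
Your proof is correct and follows essentially the same approach as the paper: both argue that $L_{\et}$ kills $\eta$-periodic objects (via \cref{thm:eta-complete}), then invoke \cref{lem:nu-equiv-iff-fiber} to conclude that $L_{\et}$ inverts $\eta$-equivalences, whence the factorization. Your treatment is slightly more detailed---you spell out why $L_{\et}$ preserves $\eta$-periodicity and give an explicit adjunction argument for the ``in particular'' clause, which the paper leaves implicit---but the substance is the same.
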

\begin{proof}
    We have to see that $L_{\et}$ inverts every morphism $f \colon E \to F$ 
    such that $f \sslash \eta$ is an equivalence.
    We know from \cref{lem:nu-equiv-iff-fiber} that $\Fib{f}$ is $\eta$-periodic,
    which implies that $L_{\et} \Fib{f} \cong 0$, hence $L_{\et} f$ is an equivalence.
\end{proof}

\section{Nonvanishing of \texorpdfstring{$\eta^3$}{eta3}}

We proved in \cref{thm:eta-complete} that for every scheme $X$ and any object $M\in\SHet{X}$, the map $M\to M^{\wedge}_\eta$ is an equivalence. 
Moreover, by \cref{prop:eta4-is-zero-aic}, we know that for any scheme $S$ defined over a field $k$ of small étale cohomological dimension, we have $\eta^4 = 0$ in $\SHet{S}$. 
Even better, \cref{cor:exists-Galois-cover-eta4-zero}, we know that if $X$ is any scheme,
there exists a finite faithfully flat map $Y\to X$ such that $\eta^4=0$ in $\SHet{Y}$. It is surprisingly hard to descent the homotopy witnessing that $\eta^4=0$ over $Y$ to $X$.
On the other hand, in topology it is true that $\eta^3$ is not null.
In this section we show that this still holds in $\SHet{S}$ for any scheme $S$ not of equicharacteristic $2$.

\begin{prop}
    \label{lem:eta3-not-null-char-zero}
    Let $S$ be a nonempty scheme of characteristic zero. Then in $\SHet{S}$, the map $\eta^3$ is not null. 
    In fact, its image in $\SHet{S}^\wedge_2$ is not null.
\end{prop}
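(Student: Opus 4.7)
The plan is to reduce via pullback to the case $S = \Spec(\bar k)$ with $\bar k$ algebraically closed of characteristic zero, apply rigidity to identify $\SHet{\bar k}^\wedge_2 \cong \Sp^\wedge_2$, and match $\eta$ with $\etatop$ under this equivalence so that the classical nonvanishing $\etatop^3 \neq 0$ in $\pi_3(\S^\wedge_2) \cong \Z/8\Z$ applies.

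For the reduction, since $S$ is nonempty and of characteristic zero, pick any point $x \in S$ and let $\bar k$ be an algebraic closure of the residue field $\kappa(x)$. The composition $f \colon \Spec(\bar k) \to \Spec(\kappa(x)) \to S$ satisfies $f^\ast \eta^3 \simeq \eta^3$, so it suffices to show that $\eta^3$ is nonzero in $\SHet{\bar k}^\wedge_2$. By \cref{lem:recap:rigidity} we have $\SHet{\bar k}^\wedge_2 \simeq \ShvTop{\et}{\bar k, \Sp}^\wedge_2 \simeq \Sp^\wedge_2$, the last equivalence holding because $\bar k$ is separably closed. As in the proof of \cref{prop:eta4-is-zero-w-contractible}, under this equivalence $(\Gm)^\wedge_2$ corresponds to $\S^\wedge_2[1]$ (using \cite[Theorem 3.6]{Bachmann2021etalerigidity} and that $\bar k$ contains all $2$-power roots of unity), so $\eta$ is classified by an element of $\pi_1(\S^\wedge_2) \cong \Z/2\Z$.

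The main obstacle is to rule out that this element is zero. For this, the approach is to reduce further to $\bar k = \C$: the embeddings $\overline{\Q} \hookrightarrow \bar k$ and $\overline{\Q} \hookrightarrow \C$ induce pullback functors on the corresponding $2$-complete étale motivic categories which, by naturality of rigidity, correspond to equivalences of $\Sp^\wedge_2$ (all three fields are separably closed, so all three rigidity equivalences produce essentially the same identification). Hence the image of $\eta$ in $\pi_1(\S^\wedge_2)$ is the same over $\bar k$, $\overline{\Q}$, and $\C$. Over $\C$, the Betti realization $\SH{\C} \to \Sp$ sends $\eta$ to $\etatop$, and after $2$-completion it factors through $\SHet{\C}^\wedge_2 \cong \Sp^\wedge_2$ (since topological realization satisfies étale hyperdescent in characteristic zero). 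Thus the image of $\eta$ under rigidity is $\etatop \neq 0$, and therefore $\eta^3$ corresponds to $\etatop^3$, which is nonzero in $\pi_3(\S^\wedge_2) \cong \Z/8\Z$ by the classical computation (cf.\ \cite[Chapter V, Proposition 5.6]{toda1962composition}). The initial reduction then transfers this nonvanishing back to $\SHet{S}^\wedge_2$, as desired.
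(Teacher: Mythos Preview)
Your proof is correct and follows essentially the same route as the paper's: reduce to an algebraically closed field of characteristic zero, pass through $\overline{\Q}$ to reach $\C$, and use Betti realization together with $2$-adic rigidity to identify $\eta$ with $\etatop$. The only cosmetic difference is that the paper packages the compatibility between the rigidity equivalence and the $2$-completed Betti realization by citing Ayoub's comparison of realizations \cite[Proposition 6.10]{ayoubWeil}, whereas you invoke it via ``naturality of rigidity'' and étale hyperdescent of the Betti functor; you might want to cite the same source to make that identification airtight.
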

\begin{proof}
    Indeed, let $x\colon \Spec(k)\to S$ be a geometric point of $S$. 
    It suffices to prove that $\eta^3$ is not null in $\Spec(k)$. If $k$ admits an embedding into the field $\C$ of the complex numbers, 
    because the Betti realisation of $\eta$ is the topological Hopf map (note that the Betti realisation does factor through $\SHet{k}$ as in \cite[Definition 1.2.5]{ayoubAnabelianPresentationMotivic2022}, where $\SHet{S}$ is denoted by $\mathrm{MSh}(S,\S)$), we see that $\eta^3\neq 0$ (cf.\ \cite[Chapter V, Equation 5.5 and Proposition 5.6]{toda1962composition}).
     Even better, we have that $\eta^3\neq 0$ after $2$-completion (as $\pi_3(\S_\mathrm{top})$ is $2$-torsion).
    Unfortunately, it might be possible that $k$ is too big to be embedded in $\C$. In this case we work as follows:
    we have a map $f\colon\Spec(k)\to\Spec(\overline{\Q})$, which induces an equivalence on étale sheaves of spectra, thus in particular on $2$-completed étale sheaves of spectra:
    \[\Sp^\wedge_2\simeq \ShvTopH{\et}{\overline{\Q},\Sp}^\wedge_2\simeq \ShvTopH{\et}{k,\Sp}^\wedge_2.\]
    Moreover, we have the following commutative diagram
    \[\begin{tikzcd}
	{\SHet{k}} & {\SHet{\overline{\Q}}} & \Sp \\
	{\ShvTopH{\et}{k,\Sp}^\wedge_2} & {\ShvTopH{\et}{\overline{\Q},\Sp}^\wedge_2} & {\Sp^\wedge_2}
	\arrow["{\rho_2}"', from=1-1, to=2-1]
	\arrow["{f^*}"', from=1-2, to=1-1]
	\arrow["{\rho_{\mathrm{B}}}", from=1-2, to=1-3]
	\arrow["{\rho_2}"', from=1-2, to=2-2]
	\arrow["{(-)^\wedge_2}", from=1-3, to=2-3]
	\arrow["{f^*}", "\simeq"', from=2-2, to=2-1]
	\arrow["\simeq"', from=2-2, to=2-3]
\end{tikzcd}\] where $\rho_\mathrm{B}$ is the Betti realisation and $\rho_2$ is the $2$-adic étale realisation
(see the proof of \cite[Proposition 6.10]{ayoubWeil}). In particular, we see that:
\[(f^*)^{-1}(\rho_2(\eta^3_{\Spec(k)}))\simeq \rho_2(\eta^3_{\Spec(\overline{\Q})})\simeq \rho_\mathrm{B}(\eta^3_{\Spec(\overline{\Q})})^\wedge_2\neq 0,\] 
where we know that $\rho_\mathrm{B}(\eta^3_{\Spec(\overline{\Q})})^\wedge_2\neq 0$ 
by the first part of the proof.
This proves that $\eta^3$ is not null in $\SHet{k}$.
\end{proof}

\begin{thm} \label{lem:eta3-not-null}
    Let $S$ be nonempty scheme which is not of equicharacteristic $2$. Then in $\SHet{S}$, the map $\eta^3$ is not null.
\end{thm}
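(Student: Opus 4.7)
Proof plan. Since pullback along any morphism $f \colon T \to S$ is a symmetric monoidal functor with $f^*\eta = \eta$, vanishing of $\eta^3$ in $\SHet{S}$ would force vanishing in $\SHet{T}$; contrapositively, it suffices to produce a map $\Spec(k) \to S$ for which $\eta^3 \neq 0$ in $\SHet{k}$. Taking $k$ to be an algebraic closure of the residue field at a point of $S$ with characteristic $p \neq 2$, the task reduces to showing $\eta^3 \neq 0$ in $\SHet{k}$ for every algebraically closed field $k$ of some characteristic $p \neq 2$. When $p = 0$ this is exactly \cref{lem:eta3-not-null-char-zero}, so assume $p > 0$.

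The plan for characteristic $p$ is to lift $k$ to characteristic zero via the ring $W \coloneqq W(k)$ of $p$-typical Witt vectors, a strictly Henselian complete DVR with residue field $k$ and fraction field $K$ of characteristic zero; fix an algebraic closure $\bar K \supset K$. The specialization diagram $\Spec(k) \xrightarrow{i} \Spec(W) \xleftarrow{j} \Spec(\bar K)$ yields symmetric monoidal pullbacks preserving $\eta$. Applying $j^*$ to $\eta^3 \in \SHet{W}^\wedge_2$ produces $\eta^3 \in \SHet{\bar K}^\wedge_2$, which is nonzero by \cref{lem:eta3-not-null-char-zero} (as $\bar K$ has characteristic zero). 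Hence $\eta^3 \neq 0$ in $\SHet{W}^\wedge_2$. The remaining step propagates this through $i^*$: since $2$ is invertible on $W$ and $(W, pW)$ is a Henselian pair with residue field $k$, the Henselian form of rigidity for étale motivic spectra identifies $i^* \colon \SHet{W}^\wedge_2 \xrightarrow{\sim} \SHet{k}^\wedge_2$ as an equivalence of symmetric monoidal $\infty$-categories, from which $\eta^3 \neq 0$ in $\SHet{k}^\wedge_2$ (and hence in $\SHet{k}$) follows at once.

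The principal obstacle is invoking this Henselian equivalence. The rigidity recorded in \cref{lem:recap:rigidity} only compares $\SHet{S}^\wedge_\ell$ with the $\ell$-completed étale topos of $S[1/\ell]$, which is vacuous here as $\ell = 2$ is already invertible on $W$. The required upgrade---identifying $\SHet{\Spec A}^\wedge_\ell \simeq \SHet{\Spec(A/I)}^\wedge_\ell$ for a Henselian pair $(A,I)$ with $\ell$ invertible in $A$---is a spectrum-level counterpart of Gabber's rigidity theorem, and can be extracted from the methods of \cite{Bachmann2021etalerigidity}. An alternative workaround would be to identify the image $\rho_2(\eta) \in \pi_1(\S)^\wedge_2 = \Z/2$ of the algebraic Hopf map under the $2$-adic étale realization as $\etatop$ via direct comparison with the topological Hopf fibration; either way, the argument ultimately rests on some Gabber-style rigidity to transfer the nonvanishing from the generic to the special fiber.
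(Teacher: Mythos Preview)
Your strategy is correct, and the obstacle you flag is smaller than you suggest: the equivalence $i^* \colon \SHet{W}^\wedge_2 \xrightarrow{\sim} \SHet{k}^\wedge_2$ already follows from \cref{lem:recap:rigidity} without any further Gabber-type input. Since $p \neq 2$ we have $W[1/2] = W$ and $k[1/2] = k$, so rigidity gives symmetric monoidal equivalences $\SHet{W}^\wedge_2 \simeq \ShvTop{\et}{W,\Sp}^\wedge_2$ and $\SHet{k}^\wedge_2 \simeq \ShvTop{\et}{k,\Sp}^\wedge_2$, naturally in the base. But $W = W(k)$ is complete local with separably closed residue field, hence strictly Henselian, and $k$ is separably closed; both small étale topoi are therefore the point, so $\ShvTop{\et}{W,\Sp} \simeq \Sp \simeq \ShvTop{\et}{k,\Sp}$ and $i^*$ becomes the identity on $\Sp^\wedge_2$. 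The rest of your argument (detecting $\eta^3 \neq 0$ in $\SHet{W}^\wedge_2$ via $j^*$ and the characteristic-zero case, then transporting along $i^*$) goes through verbatim.

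The paper takes a genuinely different route in positive characteristic, passing through rigid analytic étale motives rather than through $\Spec W(k)$. With $K$ the completed algebraic closure of the fraction field of $W(k)$, it uses a diagram $\SHet{k} \xrightarrow{\xi^*} \mathrm{Rig}\SHet{K} \xleftarrow{\mathrm{Rig}^*} \SHet{K}$ lying over $\Sp^\wedge_2$ via the $2$-adic realizations $\rho_2$, together with a separate comparison (\cref{ex:P1A2}) identifying $\xi^*\eta_{\Spec(k)} \simeq \mathrm{Rig}^*\eta_{\Spec(K)}$ inside $\mathrm{Rig}\SHet{K}$. This forces $\rho_2(\eta_k) \simeq \rho_2(\eta_K)$, and \cref{lem:eta3-not-null-char-zero} finishes. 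Your argument is more elementary in that it avoids rigid geometry and the auxiliary comparison lemma entirely; the paper's approach, on the other hand, produces the identification $\xi^*\eta \simeq \mathrm{Rig}^*\eta$ at the level of rigid motives, which carries independent content and is reused in \cref{rmk:eta4-zero-Rig}.
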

\begin{proof}
    We will use rigid analytic étale motives. As in the proof of \cref{lem:eta3-not-null-char-zero}, it suffices to prove that $\eta^3\neq 0$ in $\SHet{k}$ for $k$ an algebraically closed field. 
    If $k$ is of characteristic zero, this is \cref{lem:eta3-not-null-char-zero}. Thus, we assume that $\mathrm{char}(k)=p$ for some 
    prime number $p\neq 2$. Let $K'$ be the fraction field of the ring $W(k)$ of $p$-typical Witt vectors in $k$, and let $K$ be the completion of it algebraic closure. 
    As in the proof of \cite[Proposition 6.7]{ayoubWeil}, there is a commutative diagram of symmetric monoidal functors (see \cref{recoll:Rig-Formal-motives})
    \[\begin{tikzcd}
	{\SHet{k}} & {\mathrm{Rig}\mathcal{SH}_{\et}(K)} & {\SHet{K}} \\
	& {\Sp^\wedge_2}\rlap{.}
	\arrow["{\xi^*}", from=1-1, to=1-2]
	\arrow["{\rho_2}"', from=1-1, to=2-2]
	\arrow["{\rho_2}"', from=1-2, to=2-2]
	\arrow["{\mathrm{Rig}^*}"', from=1-3, to=1-2]
	\arrow["{\rho_2}", from=1-3, to=2-2]
\end{tikzcd}\]
We have that $\xi^*\eta_{\Spec(k)} \simeq \mathrm{Rig}^*\eta_{\Spec(K)}$. Indeed, this is shown in \cref{ex:P1A2} below.
In particular, we see that 
$\rho_2(\eta_{\Spec(k)})\simeq \rho_2(\eta_{\Spec(K)})$. The third power of the latter is not null by \cref{lem:eta3-not-null-char-zero}. This finishes the proof.
\end{proof}
\begin{rmk} \label{lem:eta-null-in-char-2}
    If $S$ is of equicharacteristic $2$, then there is a map of schemes $S\to\Spec(\mathbb{F}_2)$, and as $\SHet{\mathbb{F}_2}\simeq \SHet{\mathbb{F}_2}[\frac{1}{2}]$ by \cite[Lemma A.1]{bachmann2021remarksetalemotivicstable}, we see that $\eta=0$ in $\SHet{S}$ by \cref{lem:eta-2-tors}.
\end{rmk}

\begin{recoll}
      \label{recoll:Rig-Formal-motives}
    Let $k$ an algebraically closed field of characteristic $p>0$.
    Let $K$ be the completion of the algebraic closure of the fraction field of the ring $W(k)$ of $p$-typical Witt vectors in $k$. This is an algebraically closed valued field (see the proof of \cite[Remark 1.4.1]{zbMATH06706255} that works in this generality) with ring of integers (elements of norm $\leqslant 1$) that we denote $K^\circ$, which has residual field $K^\circ/\mathfrak{m}\simeq k$ (by \cite[Chapter II, Proposition 4.3]{zbMATH01313469}).
   
    Denote by $\mathrm{SmFSch}_{\mathrm{Spf}(K^\circ)}$ the category of $\mathfrak{m}$-adic smooth formal schemes over $\mathrm{Spf}(K^\circ)$,
    and by $\mathrm{SmRig}_K$ the category of smooth rigid analytic varieties over $K$.
    From the two categories $\mathrm{SmFSch}_{\mathrm{Spf}(K^\circ)}$ and $\mathrm{SmRig}_K$ one can define, as in algebraic geometry, categories of formal motives $\mathrm{F}\SHet{\mathrm{Spf}( K^\circ)}$ and rigid analytic motives $\mathrm{Rig}\SHet{K}$, say with the étale topology, suitably modified \cite[Definitions 3.1.3 and 2.1.15]{AGV6FF}. 
    One can define three functors on these categories:
    \begin{enumerate}
        \item The \emph{special fiber functor} $(-)_\sigma \colon \mathrm{SmFSch}_{\mathrm{Spf}(K^\circ)} \to \mathrm{Sm}_k$ that associates to a formal scheme $\Cat X \in \mathrm{SmFSch}_{\mathrm{Spf}(K^\circ)}$ its restriction along $\Spec(k)\to\mathrm{Spf}(K^\circ)$,
            and its induced functor $\sigma^* \colon \mathrm{F}\SHet{\mathrm{Spf}( K^\circ)} \to \SHet{k}$ \cite[Notations 1.1.6 and 3.1.9]{AGV6FF}.
        \item The \emph{generic fiber functor} $(-)^\mathrm{rig} \colon \mathrm{SmFSch}_{\mathrm{Spf}(K^\circ)} \to \mathrm{SmRig}_K$ that goes to the category of smooth rigid analytic varieties over $K$, 
            and its induced functor $\xi^* \colon  \mathrm{F}\SHet{\mathrm{Spf}( K^\circ)} \to \mathrm{Rig}\SHet{K}$ \cite[Notations 1.1.8 and 3.1.12]{AGV6FF}.
            %It is a localization functor \todoeasy{REF, also do we need this?}.
        \item The \emph{analytification functor} $(-)^\mathrm{an} \colon \mathrm{Sm}_K \to \mathrm{SmRig}_K$
            that sends a variety to its rigid-analytic version,
            and its induced functor $\mathrm{Rig}^* \colon \SHet{K} \to \mathrm{Rig}\SHet{K}$, see \cite[Construction 1.1.15 and Remark 2.2.6]{AGV6FF}. 
    \end{enumerate}
    Moreover, witnessing the phenomenon that ``$\mathbb{A}^1$-invariant motives do not see thickenings'', the special fiber functor $\sigma^*$ is an equivalence 
    $\mathrm{F}\SHet{\mathrm{Spf}( K^\circ)}\simeq \SHet{k}$,
    see \cite[Theorem 3.1.10]{AGV6FF} for this. 
    To summarize, there is a commutative diagram 
    \[\begin{tikzcd}
	{\mathrm{Sm}_k} & {\mathrm{SmFSch}_{\mathrm{Spf}(K^\circ)}} & {\mathrm{SmRig}_K} & {\mathrm{Sm}_K} \\
	{\SHet{k}} & {\mathrm{F}\SHet{\mathrm{Spf}( K^\circ)}} & {\mathrm{Rig}\SHet{K}} & {\SHet{K}} \rlap{.}
	\arrow[from=1-1, to=2-1]
	\arrow["{(-)_\sigma}", from=1-2, to=1-1]
	\arrow["{(-)^\mathrm{rig}}"', from=1-2, to=1-3]
	\arrow[from=1-2, to=2-2]
	\arrow[from=1-3, to=2-3]
	\arrow["{(-)^\mathrm{an}}", from=1-4, to=1-3]
	\arrow[from=1-4, to=2-4]
	\arrow["\simeq", from=2-2, to=2-1]
	\arrow["{\xi^*}"', from=2-2, to=2-3]
	\arrow["{\mathrm{Rig}^*}", from=2-4, to=2-3]
\end{tikzcd}\]
Finally, note that $K^\circ$-scheme $X$ gives an $\mathfrak{m}$-adic formal scheme $\Cat X$ through the $\mathfrak{m}$-adic formal completion functor that sends $X$ to $\Cat X \coloneqq \widehat{X} = X\times_{\Spec(K^\circ)}\mathrm{Spf}(K^\circ)$, the fiber product being taken in the category of formal schemes. In particular, if $X$ is a smooth $K^\circ$-scheme, then under the equivalence 
$\SHet{k}\simeq \mathrm{F}\SHet{\mathrm{Spf}( K^\circ)}$ the motive $\Sigma^\infty_+X_k$ of the $k$-scheme $X\times_{\Spec(K^\circ)}\Spec(k)$ corresponds to $\Sigma^\infty_+\widehat{X}$.
\end{recoll}
\begin{lem}
    \label{ex:P1A2}
    In the setting of \cref{recoll:Rig-Formal-motives}, we have $\xi^*\eta_{\Spec(k)} \simeq \mathrm{Rig}^*\eta_{\Spec(K)}$.
\end{lem}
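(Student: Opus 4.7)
The plan is to lift the Hopf projection to a common algebraic model over $K^\circ$, pass to its $\mathfrak{m}$-adic formal completion, and then read off the claimed identity from the commutative diagram displayed in \cref{recoll:Rig-Formal-motives}.

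First I would note that, by definition, both $\eta_{\Spec(k)}$ and $\eta_{\Spec(K)}$ are the $\mathbb{P}^1$-desuspensions of the canonical projection $q\colon \mathbb{A}^2\setminus\{0\} \to \mathbb{P}^1$, formed over their respective bases. Since $\xi^*$, $\sigma^*$, and $\mathrm{Rig}^*$ are symmetric monoidal, they commute with $\mathbb{P}^1$-desuspension. It therefore suffices to compare $\xi^*(\sigma^*)^{-1}(\Sigma^\infty_+ q_k)$ and $\mathrm{Rig}^*(\Sigma^\infty_+ q_K)$ in $\mathrm{Rig}\SHet{K}$, where $q_k$ and $q_K$ denote the projections over $k$ and $K$ respectively.

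Next, consider the smooth $K^\circ$-schemes $U \coloneqq \mathbb{A}^2_{K^\circ}\setminus\{0\}$ and $V \coloneqq \mathbb{P}^1_{K^\circ}$, together with the canonical projection $q_{K^\circ}\colon U \to V$. Taking $\mathfrak{m}$-adic formal completions yields a morphism $\widehat{q}\colon \widehat{U}\to\widehat{V}$ in $\mathrm{SmFSch}_{\mathrm{Spf}(K^\circ)}$. By the last paragraph of \cref{recoll:Rig-Formal-motives}, the equivalence $\sigma^*$ identifies $\Sigma^\infty_+\widehat{q}$ with $\Sigma^\infty_+ q_k$, since the special fiber of $\widehat{U}$ is $U\times_{K^\circ} k = \mathbb{A}^2_k\setminus\{0\}$ and likewise for $V$. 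On the other hand, the generic fiber identifies $\widehat{U}^{\mathrm{rig}}$ with $(U\times_{K^\circ} K)^{\mathrm{an}} = (\mathbb{A}^2_K\setminus\{0\})^{\mathrm{an}}$, and analogously for $V$, so the right rectangle of the diagram in \cref{recoll:Rig-Formal-motives} gives $\xi^*(\Sigma^\infty_+\widehat{q}) \simeq \mathrm{Rig}^*(\Sigma^\infty_+ q_K)$.

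Combining these two identifications shows that $\xi^*(\sigma^*)^{-1}(\Sigma^\infty_+ q_k) \simeq \mathrm{Rig}^*(\Sigma^\infty_+ q_K)$, and $\mathbb{P}^1$-desuspending yields the stated equivalence. The only step requiring care is verifying that the construction of $\eta$ as a $\mathbb{P}^1$-desuspension of the projection is preserved by all three functors, but this is automatic from their symmetric monoidal structure and the fact that $\Sigma^\infty_+$ commutes with the relevant pullbacks. I expect no genuine obstacle beyond this bookkeeping, since the point of the Recollection is precisely to assemble all compatibilities into a single commutative diagram.
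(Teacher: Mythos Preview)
Your argument has a genuine gap at the step where you assert that ``the generic fiber identifies $\widehat{U}^{\mathrm{rig}}$ with $(U\times_{K^\circ} K)^{\mathrm{an}} = (\mathbb{A}^2_K\setminus\{0\})^{\mathrm{an}}$''. This is false at the level of rigid analytic varieties: for a non-proper $K^\circ$-scheme $X$, the Raynaud generic fiber $\widehat{X}^{\mathrm{rig}}$ is in general only an open subspace of $(X_K)^{\mathrm{an}}$, not all of it. Concretely, $\widehat{\mathbb{A}^n_{K^\circ}}^{\mathrm{rig}}$ is the closed unit polydisc $\mathbb{B}^n$, not the full rigid affine space $(\mathbb{A}^n_K)^{\mathrm{an}}$; similarly $(\widehat{\mathbb{A}^2_{K^\circ}\setminus\{0\}})^{\mathrm{rig}}$ is a bounded region inside $(\mathbb{A}^2_K\setminus\{0\})^{\mathrm{an}}$. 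Nothing in the commutative diagram of \cref{recoll:Rig-Formal-motives} furnishes such a comparison: there is no arrow from $K^\circ$-schemes in that diagram, and the last paragraph of the Recollection only records the behaviour on special fibers.

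What is true, and what the paper actually uses, is that the comparison map $(\widehat{\mathbb{A}^2_{K^\circ}\setminus\{0\}})^{\mathrm{rig}} \to (\mathbb{A}^2_K\setminus\{0\})^{\mathrm{an}}$ becomes an equivalence \emph{after passing to} $\mathrm{Rig}\SH{K}$; this is the content of \cite[Corollaire 1.3.5]{AyoubRigMot} and is the nontrivial input. For $\mathbb{P}^1$ the issue disappears because it is proper, so $(\widehat{\mathbb{P}^1_{K^\circ}})^{\mathrm{rig}} \cong (\mathbb{P}^1_K)^{\mathrm{an}}$ already as rigid spaces. Once you supply this citation for the $\mathbb{A}^2\setminus\{0\}$ comparison, the rest of your outline (lifting $q$ over $K^\circ$, passing to formal completions, desuspending) is exactly the paper's argument.
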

\begin{proof}
    Indeed, by \cite[Corollaire 1.3.5]{AyoubRigMot}, the map 
    \[ (\widehat{\mathbb{A}^2_{K^\circ}\setminus \{0\}})^\mathrm{rig}\to (\mathbb{A}^2_{K}\setminus\{0\})^\mathrm{an} \] is an equivalence in $\mathrm{Rig}\SH{K}$. As $(\widehat{\mathbb{P}^1_{K^\circ}})^\mathrm{rig}\to (\mathbb{P}^1_{K})^\mathrm{an}$ is already an equivalence as rigid analytic varieties (see \cite[Definition 2.2.11]{AGV6FF}), the commutative square 
\[\begin{tikzcd}
	{(\widehat{\mathbb{A}^2_{K^\circ}\setminus \{0\}})^\mathrm{rig}} & {(\mathbb{A}^2_{K}\setminus\{0\})^\mathrm{an}} \\
	{(\widehat{\mathbb{P}^1_{K^\circ}})^\mathrm{rig}} & {(\mathbb{P}^1_{K})^\mathrm{an}}
	\arrow[from=1-1, to=1-2]
	\arrow[from=1-1, to=2-1]
	\arrow[from=1-2, to=2-2]
	\arrow[from=2-1, to=2-2]
\end{tikzcd}\] induces an equivalence $\xi^*\eta_{\Spec(k)} \simeq \mathrm{Rig}^*\eta_{\Spec(K)}$ in $\mathrm{Rig}\SHet{K}$.
\end{proof}

\begin{rmk}
    \label{rmk:eta4-zero-Rig}
    The \cref{ex:P1A2} does not require any of the fields $k$ and $K$ to be algebraically closed. In particular, we see that this applies to $K=\Q_p(i)$, which is interesting in view of \cref{rmk:long-after-conj} (4). Using \cref{lem:codim-1-eta-4=0} and \cref{ex:P1A2}, we see that $\mathrm{Rig}^*(\eta_{\Spec\Q_p(i)}^4)$ is zero. 
\end{rmk}

\section{Detecting nilpotence}
\label{sect:detect-nilpotence}
In the main body of this paper, we focused on the Hopf map $\eta$. In this last section, we prove
the étale motivic analog of May's Nilpotence Conjecture, proven by Mathew--Naumann--Noel \cite{zbMATH06525830}.
As a corollary, we obtain the étale analogue of Nishida's Nilpotence Theorem \cite{zbMATH03411885} about 
the nilpotence of all positive degree self maps of the étale sphere.
\begin{thm}[May nilpotence conjecture for $H\Z_{\et}$]
    \label{thm:detects-nilpotence}
    Let $S$ be an étale locally étale bounded scheme and let $R\in\CAlg{\SHet{S}}$ be a motivic ring spectrum. Then $R\otimes H\Z_{\et}=0$ if and only if $R=0$.
\end{thm}
\begin{proof}
    We may assume that $S=\Spec(k)$ is the spectrum of a separably closed field by \cite[Corollary 5.12]{Bachmann2021etalerigidity}. 
    Recall from \cref{prop:LetHZ=HLetZ-any-scheme} that $H\Z_{\et} \cong H\Z^{\mathrm{top}}$.
    Let $R\in\CAlg{\SHet{k}}$ be such that $R \otimes H\Z^{\mathrm{top}}=0$. Then $R\otimes \Q \cong R \otimes H\Q^{\mathrm{top}} = 0$.
    % Thus, because the unit is compact in $\SHet{k}$, we see that there exists a nonzero integer $m$ such that $R[1/m]=0$. 
    We claim that also $R^\wedge_\ell=0$ for each prime number $\ell$. 
    If $\ell = \mathrm{char}(k)$, this follows since the whole $\ell$-completed 
    category $\SHet{k}^\wedge_\ell$ vanishes by \cite[Theorem A.1]{bachmann2021remarksetalemotivicstable}.
    If $\ell \neq \mathrm{char}(k)$, by rigidity, we can view $R^\wedge_\ell$ as an honest ring spectrum $A\in\CAlg{\Sp}$, and we may use May's nilpotence conjecture, \cite[Theorem A]{zbMATH06525830}. 
    In particular, it suffices to prove that $A \otimes H\Z^\mathrm{top}\in\Sp$ vanishes, 
    for which it is enough to show that vanishing of $A\otimes H\Q^\mathrm{top}$ and $A\otimes H\finfld{p}^{\mathrm{top}}$ for all primes $p$. 
    That $A\otimes H\Q^\mathrm{top}$ vanishes can be seen as follows: In $\SHet{k}$, because there is a map of algebras $R\otimes \Q\to R^\wedge_\ell\otimes \Q$ whose source is zero, we have that $R^\wedge_\ell\otimes\Q = 0$ where the rationalisation 
    is taken in $\SHet{k}$. Now, a straightforward game of adjunctions, using that both $\S\in\SHet{k}$ and $\S\in\Sp$ are compact, and rigidity, shows that 
    \[\map{\SHet{k}}{\S}{R^\wedge_\ell\otimes \Q}\simeq \map{\SHet{k}}{\S}{R^\wedge_\ell}\otimes\Q \simeq \map{\Sp^\wedge_\ell}{\S^\wedge_\ell}{A}\otimes\Q\simeq \map{\Sp}{\S}{A\otimes H\Q^\mathrm{top}},\] so that in spectra the commutative ring $A\otimes H\Q^\mathrm{top}$ vanishes.
    If $p\neq \ell$, then $p$ is invertible on $R^\wedge_\ell$, thus $A\otimes H\finfld{p}^\mathrm{top}=0$. For $p=\ell$, we have 
    \[0=(R\otimes H\Z^{\mathrm{top}})^\wedge_\ell\sslash\ell \simeq (R\otimes H\Z^{\mathrm{top}}) \sslash \ell \simeq R \otimes H\finfld{\ell}^\mathrm{top} \simeq A\otimes H\finfld{\ell}^\mathrm{top}.\] 
    We conclude as the family of functors $(-\otimes \Q,((-)^\wedge_\ell)_{\ell\text{ prime}})$ form a conservative family on $\SHet{k}$.
\end{proof}
\begin{rmk}
    The reader familiar with \cite{bachmann2019nilpotence} may find it odd that we do not need $R$ to be normed in \cref{thm:detects-nilpotence}. 
    In fact by \cite[Corollary C.13]{bachmann2020norms} any commutative algebra in $\SHet{S}$ is automatically normed, 
    so the above result is coherent with the need of norms in \cite{bachmann2019nilpotence}.
\end{rmk}

\begin{cor} \label{lem:detects-nilpotence-maps}
    Let $S$ be an étale locally étale bounded scheme. Let $R\in\CAlg{\SHet{S}}$, $p,q\in\Z$ and $\nu\colon \S^{p,q}\otimes R\to R$. 
    Then $\nu\otimes H\Z_{\et}$ is weakly nilpotent if and only if $\nu$ is weakly nilpotent.
\end{cor}
\begin{proof}
    By \cref{lem:recap:compact} we know that $\SHet{S}$ is compactly generated,
    and hence by e.g.\ \cite[Lemma 2.2]{aoki2025higherpresentablecategorieslimits} 
    the same is true for $\Mod{R}{\SHet{S}}$. Hence, we see that in $\Mod{R}{\SHet{S}}$ 
    the $\nu$-periodization is given by the mapping telescope, cf.\ \cref{lem:telescope}.
    Now, that $\nu\otimes H\Z_{\et}$ is weakly nilpotent means that the ring $(R\otimes H\Z_{\et})[\nu^{-1}] \simeq R[\nu^{-1}]\otimes H\Z_{\et}$ vanishes. By \cref{thm:detects-nilpotence}, this implies that $R[\nu^{-1}]=0$, thus that $\nu$ is weakly nilpotent.
\end{proof}

We conclude with an étale motivic version of Nishida's nilpotence theorem \cite{zbMATH03411885}.

\begin{cor} [Nishida nilpotence for $\SHet{S}$] \label{cor:Nishida}
    Let $S$ be an étale locally étale bounded scheme.
    Then any torsion map $\nu \colon \S^{p,q}\to \S$ in $\SHet{S}$, is weakly nilpotent
    (in particular, if $S$ is étale bounded, any such $\nu$ is nilpotent).
   
\end{cor}
\begin{proof}
    By pulling back to the geometric points of $S$ we may assume $S$ is the spectrum of a separably closed field:
    Indeed, this is jointly conservative by \cite[Corollary 5.12]{Bachmann2021etalerigidity}, and 
    preserves the mapping telescopes as they are given by colimits.
    Since $\S$ is compact, there is a nonzero integer $m$ such that $m\nu=0$.  Consider the co/fiber sequence $H\Z_{\et}\to H\Z_{\et}[1/m]\to H\Z_{\et}[1/m]/H\Z_{\et}$. We obtain an exact sequence 
    \[\pi_1(\map{\SHet{k}}{\S^{p,q}}{H\Z_{\et}[1/m]/H\Z_{\et}})\to \pi_0(\map{\mathrm{DA}^{\et}(k,\Z)}{\S^{p,q}\otimes H\Z_{\et}}{H\Z_{\et}})\to \pi_0(\map{\SHet{k}}{\S^{p,q}}{H\Z_{\et}[1/m]}).\]
    The image of $\nu\otimes H\Z_{\et}$ by the right map is zero, so we see that $\nu$ comes from an element in the left group which is isomorphic to 
    \[\pi_1(\map{\SHet{k}}{\S^{p,q}}{H\Z_{\et}[1/m]/H\Z_{\et}})\simeq \colimil{a} \mathrm{Ext}^{-(p+q+1)}_{\Z/a\Z}(\Z/a\Z,\Z/a\Z) \cong 0\] where the colimit ranges over integers $a$ such that if a prime number $\ell$ divides $a$, 
    it divides $m$ (this isomorphism is otained as follows: first, write $H\Z_{\et}[1/m]/H\Z_{\et}$ as the filtered colimit $\colimil{a} H(\Z/a\Z)_{\et}$ with $a$ as above, and use that the sphere $S^{p,q}$ is compact. Next using adjunction and \cref{prop:LetHZ=HLetZ}, the mapping spectrum is computed in the $\Z/a\Z$-linearisation of $\SHet{k}$, which is, by rigidity, $\mathrm{D}(\Z/a\Z)$). 
    This group vanishes since $p + q + 1 \neq 0$ (we can always replace $(p,q)$ by $(2p,2q)$ and $\nu$ by $\nu^2$) and $\Z/a\Z$ is a projective $\Z/a\Z$-module.
    Thus, we see that $\nu\otimes H\Z_{\et} = 0$ thus by \cref{lem:detects-nilpotence-maps}, the map $\nu$ is weakly nilpotent.
    If $S$ is étale bounded it is nilpotent by \cref{lem:recap:compact,lem:nu:cmpt-nilpotent}. 
\end{proof}

Recall the following:
\begin{defn} \label{defn:BS}
    A scheme $S$ \emph{satisfies the Beilinson--Soulé vanishing conjecture} if 
    any map $\S^{p,q}\otimes H\Q_{\et}\to H\Q_{\et}$ with $p>0$ is null. 
\end{defn}
The above conjecture is stated in \cite[Section 3 Be3]{Suslin2000} (that this integral version is equivalent to \cref{defn:BS} is proven for example in \cite[Lem. 24]{zbMATH02247521}).
\begin{cor}
    \label{cor:Nishida+BS}
    If $S$ satisfies the Beilinson--Soulé vanishing conjecture, one can remove the assumption in \cref{cor:Nishida} that $\nu$ is torsion, adding the assumption that either $p>0$ or $q>0$.
\end{cor}
\begin{proof}
  We know that there are no nonzero maps $\S^{p,q}\otimes H\Q_{\et}\to H\Q_{\et}$ for $p>0$, thus if $p>0$, we see that $\nu$ is torsion. If $q>0$, by \cite[Corollary 4.26 (1)]{hoyois2015hopkinsmorel} we can also conclude that $\nu\otimes H\Q_{\et}=0$ so that $\nu$ is torsion.   
\end{proof}

\begin{rmk}
    \begin{enumerate}
        \item \cref{cor:Nishida+BS} applies to $k$ a finite field \cite{zbMATH03394392}, a number field (\cite{zbMATH03373095,zbMATH03495395}), or any of their algebraic extensions (by continuity). It also applies to function fields of curves over finite fields by \cite{zbMATH03606688}.
        \item One can state a slightly more general version of \cref{cor:Nishida}: it applies over any scheme to a torsion map which is pulled back from an étale locally étale bounded scheme.
        \item Of course, if $p = q = 0$, then there are non-torsion maps that are non-nilpotent, e.g.\ multiplication by any nonzero integer $m \in \Z$.
    \end{enumerate}
\end{rmk}

\bibliographystyle{alpha}
{\footnotesize%More compact bibliography
\bibliography{bibliography}}

\end{document}